\documentclass[11pt]{amsart}
\usepackage[utf8]{inputenc}
\usepackage{hyperref}
\usepackage[shortlabels]{enumitem}
\usepackage{graphicx}

%% Theorem environments
\usepackage{thmtools}

%% For major results of ours:
\declaretheorem[name=Theorem,numberwithin=section]{theorem}
\declaretheorem[name=Lemma,sibling=theorem]{lemma}
\declaretheorem[name=Proposition,sibling=theorem]{proposition}
\declaretheorem[name=Corollary,sibling=theorem]{corollary}

%% For claims, remarks, conjectures, etc.
\declaretheorem[name=Remark,sibling=theorem]{remark}

\declaretheorem[name=Question,sibling=theorem]{ques}

%% Math commands
\newcommand{\dd}{\,\mathrm d}
\newcommand{\half}{\frac{1}{2}}
\makeatletter
\newcommand*{\Relbarfill@}{\arrowfill@\Relbar\Relbar\Relbar}
\newcommand*{\xeq}[2][]{\ext@arrow 0055\Relbarfill@{#1}{#2}}
\makeatother

\allowdisplaybreaks

\title{The Broken Stick Project}
\author{P.\ A.\ CrowdMath}
\date{\today}

\begin{document}
\maketitle

\begin{abstract}
The \emph{broken stick problem} is the following classical question.
\begin{quote}
	You have a segment $[0,1]$.
	You choose two points on this segment at random.
	They divide the segment into three smaller segments.
	Show that the probability that the three segments
	form a triangle is $1/4$.
\end{quote}
The MIT PRIMES program, together with Art of Problem Solving,
organized a high school research project where participants
worked on several variations of this problem.
Participants were generally high school students
who posted ideas and progress to the Art of Problem Solving
forums over the course of an entire year,
under the supervision of PRIMES mentors.

This report summarizes the findings of this CrowdMath project.
\end{abstract}

%\tableofcontents

\section{Introduction}
The \emph{broken stick problem} is the following classical question.
\begin{quote}
	You have a segment $[0,1]$.
	You choose two points on this segment at random.
	They divide the segment into three smaller segments.
	Show that the probability that the three segments
	form a triangle is $1/4$.
\end{quote}
See \cite{orig} for one possible reference.
This is also called the ``broken spaghetti problem'',
due to a popular fact that spaghetti noodles almost
never break in half when force is applied on the two ends
\cite{spaghetti}.

The 2017 entrance exam for MIT PRIMES \cite{mitprimes}
asked applicants to come up
with interesting generalizations and variations of the problem.
PRIMES decided to use some of these proposed variations
to start a polymath project hosted on the
Art of Problem Solving forums \cite{aops}.
The project was done under the name P.\ A.\ Crowdmath.
The students worked on some of these variations they
themselves suggested, and came up with new directions.
The project was guided by PRIMES mentors Evan Chen
and Dr.~Tanya Khovanova.

The main results obtained broadly fall into three categories.
In \S\ref{sec:known}, we give results about the expected area and
the length of the $k$th longest segment
if the interval $[0,1]$ is randomly divided into $n$ segments.
In \S\ref{sec:square}, we consider an analogous problem where a unit square $[0,1]^2$
is broken into pieces.
Finally, in \S\ref{sec:numtri} we give results about the probability
that $m$ or more $k$-gons are formed if $[0,1]$ is split into $n$ pieces.

\section{Expected area and $k$th moment}
\label{sec:known}
We begin with a couple variations of the problem
which were found in the literature.

\subsection{Expected area of triangle}
One of the most natural geometric questions about the formed triangle
is the area, answered in e.g.\ \cite{gbook1}.
\begin{proposition}
	[{\cite[page 269-270]{gbook1}}]
	\label{areatriangle}
	We break a unit segment randomly into three segments.
	Given that the three segments form a triangle,
	the expected area is $\frac{\pi}{105}$.
\end{proposition}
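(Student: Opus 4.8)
The plan is to work directly with the joint distribution of the three side lengths and integrate Heron's formula over the triangle-forming region, treating the statement as a conditional expectation.

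First I would fix coordinates. Choosing two points uniformly and independently on $[0,1]$ and sorting them induces the uniform distribution of the length triple $(a,b,c)$ on the simplex $\{a,b,c\ge 0,\ a+b+c=1\}$. Using $(a,b)$ as free variables with $c=1-a-b$, the joint density is a constant on the triangle $\{a,b>0,\ a+b<1\}$, so the quantity we want is the ratio
\begin{equation*}
\mathbb{E}[\text{Area}\mid \text{triangle}]
=\frac{\int_{T}\text{Area}(a,b)\dd a\dd b}{\int_{T}\dd a\dd b},
\end{equation*}
where $T$ is the triangle-forming region and the constant density has cancelled. Three lengths summing to $1$ form a triangle exactly when each is less than $\half$, so $T=\{a<\half,\ b<\half,\ a+b>\half\}$, a triangle of area $\tfrac18$ (this is the region witnessing the classical probability $\tfrac14$).

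Next I would invoke Heron's formula. With semiperimeter $s=\half$, the area equals $\sqrt{s(s-a)(s-b)(s-c)}=\sqrt{\half(\half-a)(\half-b)(\half-c)}$. The key simplification is the change of variables $x=\half-a$, $y=\half-b$, $z=\half-c$, which has Jacobian $1$ and satisfies $x+y+z=\tfrac32-(a+b+c)=\half$. Under this substitution the triangle condition $a,b,c<\half$ becomes simply $x,y,z>0$, so $T$ maps onto the interior of the smaller simplex $\{x,y,z>0,\ x+y+z=\half\}$, and the integrand becomes the fully symmetric expression $\sqrt{\half\,xyz}$.

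Finally I would evaluate the resulting symmetric integral as a Dirichlet integral. Rescaling $x,y,z$ to the standard simplex reduces the numerator to a constant multiple of $\int u^{1/2}v^{1/2}w^{1/2}$ over $\{u,v,w>0,\ u+v+w=1\}$, which equals $\Gamma(3/2)^3/\Gamma(9/2)$; dividing by the area $\tfrac18$ of $T$ then yields $\frac{\pi}{105}$. The change of variables and the Dirichlet identity are clean, so the main obstacle is purely the bookkeeping: tracking the scaling constants produced by passing to the standard simplex and correctly evaluating the half-integer Gamma values $\Gamma(3/2)=\tfrac{\sqrt\pi}{2}$ and $\Gamma(9/2)=\tfrac{105}{16}\sqrt\pi$, where arithmetic slips are easiest to make.
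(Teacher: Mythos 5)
Your proposal is correct, and it is worth noting that the paper itself gives no proof of this proposition at all --- it simply cites \cite{gbook1}, so there is no in-paper argument to compare against. Your route is a clean, self-contained derivation: the reduction to the ratio of integrals over the region $T=\{a<\half,\ b<\half,\ a+b>\half\}$ of area $\tfrac18$ is right, the substitution $x=\half-a$, $y=\half-b$, $z=\half-c$ correctly carries $T$ onto the simplex $\{x,y,z>0,\ x+y+z=\half\}$ with unit Jacobian and symmetrizes Heron's formula to $\sqrt{\half xyz}$, and the bookkeeping you flagged as the danger point does check out: rescaling by $\tfrac12$ contributes a factor $\tfrac14$ from the Jacobian and $\tfrac14$ from $\sqrt{\tfrac12\cdot\tfrac18 uvw}=\tfrac14\sqrt{uvw}$, the Dirichlet integral is $\Gamma(3/2)^3/\Gamma(9/2)=\tfrac{2\pi}{105}$, and $\tfrac{1}{16}\cdot\tfrac{2\pi}{105}\cdot 8=\tfrac{\pi}{105}$. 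The one step you state without justification is that the area of the triangle with sides $(a,b,c)$ is what is being averaged via Heron's formula, which is immediate; everything else is fully specified. This would make a perfectly good replacement for the missing proof.
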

%\begin{proof}
%	Let $x<y$ be the breaking points.
%	Then, the triangle has side lengths $x, y-x,$ and $1-y$.
%	Since the perimeter of the triangle is $1$, the semiperimeter is $\frac 12$.
%	Since a triangle is valid iff all side lengths are shorter than the semiperimeter,
%	we have $x < \frac 1 2,$ $y-x < \frac 1 2,$ and $y > \frac 1 2$.
%	
%	By Heron's formula, the expected value of the area of the triangle formed
%	is then 
%	\[ \frac{\int_{1/2}^1 \int_{y-1/2}^{1/2} \sqrt{\frac 1 2 \left(\frac 1 2 - x\right) \left(\frac 1 2 - y + x\right) \left(y - \frac 1 2\right)} \dd x \dd y}{\int_{1/2}^1 \int_{y-1/2}^{1/2} 1 \dd x \dd y}  = \frac{\frac \pi {840}}{\frac 1 8} = \frac{\pi}{105}. \]
%
%    Evaluation of the integral:
%    \begin{align*}
%    \int_{1/2}^1 &\int_{y-1/2}^{1/2} \sqrt{\frac 1 2 \left(\frac 1 2 - x\right) \left(\frac 1 2 - y + x\right) \left(y - \frac 1 2\right)} \dd x \dd y = \\
%    &= \frac 14 \int_{1/2}^1 \sqrt{2y-1}\int_{y-1/2}^{1/2} \sqrt{\left( y - 1 \right)^2 - \left( 2x-y \right)^2} \dd x \dd y \xeq{u=2x-y} \\
%    &= \frac 18 \int_{1/2}^1 \sqrt{2y-1}\int_{y-1}^{1-y} \sqrt{(y - 1)^2 - u^2} \dd u \dd y \xeq{t=\arcsin\left(\frac{u}{1-y}\right)} \\
%    &= \frac 18 \int_{1/2}^1 \sqrt{2y-1} (y-1)^2 \int_{-\pi / 2}^{\pi / 2} \cos^2 t \dd t \dd y = \\
%    &= \frac{\pi}{16}\int_{1/2}^1 \sqrt{2y-1} (y-1)^2\dd y \xeq{v=\sqrt{2y-1}}\\
%    &= \frac{\pi}{16}\int_0^1 \left( \frac 14v^6 - \frac 12 v^4 + \frac 14v^2\right) \dd v = \\
%	&= \frac{\pi}{840}. \qedhere
%    \end{align*}
%\end{proof}
Similarly, for a stick of length $k$,
the expected value of the area of the triangle is $\frac{\pi}{105}k^2$.

Here is a variant:
\begin{proposition}
	Suppose we split a unit segment into 3 pieces.
	If we are unable to form a triangle,
	we take the largest segment and split it into 3 pieces again.
	With these 3 pieces, we try to create a triangle.
	If we fail, we continue the process.

	At the end of the process,
	the expected perimeter is $1/2$ and the expected area
	is $\frac{8\pi}{2205}$.
\end{proposition}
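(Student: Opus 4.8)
The plan is to exploit the self-similarity of the process. At each round we take the current segment, of some length $s$, and break it uniformly into three pieces; with probability $1/4$ these form a triangle and we stop, and with probability $3/4$ we fail, pass to the largest of the three pieces, and repeat. The two crucial observations are: (i) if we succeed at a round whose segment has length $s$, then the perimeter of the resulting triangle is exactly $s$ (the three pieces sum to $s$), while by scale invariance together with Proposition~\ref{areatriangle} the expected area of that triangle is $s^2\cdot\frac{\pi}{105}$; and (ii) conditioned on failure, the segment we carry to the next round is an independent, rescaled copy of the original problem. Writing $P$ and $A$ for the expected final perimeter and area starting from a unit segment, scale invariance gives that starting from length $s$ these become $sP$ and $s^2A$, so both quantities satisfy a one-step linear recursion.

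First I would record the distribution of the largest piece. If $(a,b,c)$ are the three lengths of a uniform break of $[0,1]$, then each marginal satisfies $\Pr[a>t]=(1-t)^2$, and since at most one piece can exceed $1/2$, for $m\in[\tfrac12,1]$ we get $\Pr[M>m]=3(1-m)^2$, i.e.\ the maximum $M=\max(a,b,c)$ has density $6(1-m)$ on $[\tfrac12,1]$ (consistent with $\Pr[M>\tfrac12]=\tfrac34$, the failure probability). Integrating then yields the two conditional moments I need, $\mu:=E[M\mid M\ge\tfrac12]$ and $\nu:=E[M^2\mid M\ge\tfrac12]$; a direct computation gives $\mu=\tfrac23$ and $\nu=\tfrac{11}{24}$.

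With these in hand, the two recursions close up immediately. Conditioning on the outcome of the first round,
\[
P=\tfrac14\cdot 1+\tfrac34\,\mu\,P,\qquad A=\tfrac14\cdot\frac{\pi}{105}+\tfrac34\,\nu\,A,
\]
where the factor $\mu$ (resp.\ $\nu$) arises because, given failure, the next round operates on a segment whose length is $M$ times the current one, and perimeter (resp.\ area) scales like the first (resp.\ second) power of length. Plugging in $\mu=\tfrac23$ gives $P=\tfrac14/(1-\tfrac12)=\tfrac12$, and plugging in $\nu=\tfrac{11}{24}$ gives $A=\frac{\pi}{420}/(1-\tfrac{11}{32})=\frac{\pi}{420}\cdot\frac{32}{21}=\frac{8\pi}{2205}$, as claimed.

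The main thing to get right is the justification of observation (ii): that the largest piece, conditioned on failure, really is distributed as the original segment rescaled by an independent factor, so that the future evolution is an independent copy of the whole process scaled by $M$ (resp.\ $M^2$ for area). This is what licenses replacing the entire recursive tail by $\mu P$ and $\nu A$ and collapsing everything to the displayed scalar equations. One should also check convergence, i.e.\ that the process terminates almost surely and the expectations are finite; this is immediate since the contraction factors $\tfrac34\mu=\tfrac12$ and $\tfrac34\nu=\tfrac{11}{32}$ are both less than $1$, so the implicit geometric series converge.
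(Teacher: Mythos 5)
Your proposal is correct and follows essentially the same route as the paper: the same one-step recursions $P=\tfrac14+\tfrac34\mu P$ and $A=\tfrac14\cdot\tfrac{\pi}{105}+\tfrac34\nu A$ with $\mu=\tfrac23$ and $\nu=\tfrac{11}{24}$. The only difference is cosmetic --- you compute the conditional moments of the largest piece from its density $6(1-m)$ on $[\tfrac12,1]$, whereas the paper writes the largest piece as $c+\tfrac12$ and uses symmetry plus one small integral; and you are more explicit about the self-similarity and convergence justifications, which the paper leaves implicit.
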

\begin{proof}
	Let us solve this with events with states. There is a probability of $\frac{1}{4}$ of being able to form a triangle and a probability of $\frac{3}{4}$ of not being able to form a triangle.

	If we do not form a triangle, then without the loss of generality, suppose both breaks are to the left of $\frac{1}{2}$. They divide the interval $[0,\frac{1}{2}]$ into three pieces. Let the leftmost piece be $a$, middle be $b$ and rightmost be $c$. So we have $a+b+c=\frac{1}{2}$. By symmetry, the expected value of $a, b,$ and $c$ are $\frac{1}{6}$, so the expected value of $a+b$ is $\frac{1}{3}$. After removing $a$ and $b$, we are left with the largest segment (a piece with length $\frac{2}{3}$) of the triangle and start over.

	Let $P$ be the expected perimeter. Then $P=\frac{1}{4} \times1 + \frac{3}{4} \times \frac{2}{3}P$. Solving, we get $P=\frac{1}{2}$.

	If the perimeter is reduced to $X$, then the area is multiplied by a factor of $X^2$, so we should find the expected value of $(c+\frac{1}{2})^2$. Note that $\mathbb E[(c+\frac{1}{2})^2] = \mathbb E[c^2+ c + \frac{1}{4}] = \mathbb E[c^2] + \mathbb E[c] + \frac{1}{4}$. We know $\mathbb E[c] = \frac{1}{6}$, so it remains to find $\mathbb E[c^2]$.

	\[ \frac{\int\limits_{0}^{\frac{1}{2}} \int\limits_{x}^{\frac{1}{2}} x^2\; dy\; dx}{ \int\limits_{0}^{\frac{1}{2}} \int\limits_{x}^{\frac{1}{2}} 1\; dy\; dx} = \frac{1}{24}. \]

	So $\mathbb E[(c+\frac{1}{2})^2] =\frac{11}{24}$

	Let $A$ be the expected area. Then $A= \frac{1}{4} \times \frac{\pi}{105} + \frac{3}{4} \times \frac{11}{24}A$. Solving, we get $A=\frac{8\pi}{2205}$.
\end{proof}

We also have the following result
about whether the area formed can exceed a certain value.
\begin{theorem}
	Let $A_0 \in (0, \sqrt{3}/36)$.
	Given that a triangle can be formed,
	the probability that its area $A$ exceeds $A_0$ is given by
	\[ \mathbb P(A > A_0) = 4\int_{\mu_1}^{\mu_2} \sqrt{k^2(1-k^2)^2 - (8A_0)^2} \; dk \]
	where $0 < \mu_1 < \mu_2 < 1$ are the two roots of
	the polynomial $k(1-k^2) - 8A_0$.
\end{theorem}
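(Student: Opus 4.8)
The plan is to compute the conditional probability as a ratio of areas in the sample space of the two break points, and then to reduce the resulting double integral to the stated single integral by a well-chosen substitution.

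First I would set up coordinates. Let the two break points be $x,y$, uniform on $[0,1]^2$; after relabelling so that $x<y$ the three pieces have lengths $a=x$, $b=y-x$, $c=1-y$, and a triangle forms exactly when each length is below $\half$. Since the triangle-forming event has probability $\frac14$, the conditional law is uniform on that region with density $4$, so $\mathbb P(A>A_0)=4\cdot(\text{area of }\{A>A_0\})$ --- which already explains the leading factor of $4$. Because the semiperimeter is $s=\half$, Heron's formula gives $A=\tfrac14\sqrt{(1-2a)(1-2b)(1-2c)}$, i.e.\ $16A^2=(1-2a)(1-2b)(1-2c)$, with each factor in $(0,1)$ and the three factors summing to $1$.

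Next I would pass to coordinates adapted to this product. Fix the third side through $\gamma=1-2c$ and record the asymmetry of the other two sides through $\delta=(1-2a)-(1-2b)$; then $(1-2a)(1-2b)=\tfrac14((1-\gamma)^2-\delta^2)$, so that $64A^2=\gamma((1-\gamma)^2-\delta^2)$. Tracking the (constant) Jacobians of $(x,y)\mapsto(a,b)$ and $(a,b)\mapsto(\gamma,\delta)$, together with the factor $2$ from the two orderings of $x,y$, collapses all numerical constants and leaves $\mathbb P(A>A_0)=\iint_{\{A>A_0\}}\dd\gamma\dd\delta$. For a fixed $\gamma$ the constraint $A>A_0$ is the interval $\delta^2<(1-\gamma)^2-\tfrac{64A_0^2}{\gamma}$, whose length is $2\sqrt{(1-\gamma)^2-\tfrac{64A_0^2}{\gamma}}$; note this forces $|\delta|<1-\gamma$, so the simplex constraints $1-2a,1-2b>0$ hold automatically and no triangle-inequality boundary is active. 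This reduces everything to a single integral in $\gamma$.

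The crucial step --- and the one I expect to be the main obstacle --- is recognizing the substitution $\gamma=k^2$, i.e.\ $k=\sqrt{1-2c}$. Under it the slice maximum of the area (at $\delta=0$) becomes $8A=k(1-k^2)$, the width rewrites as $\tfrac{2}{k}\sqrt{k^2(1-k^2)^2-(8A_0)^2}$, and $\dd\gamma=2k\dd k$ exactly cancels the $1/k$, producing the integrand $4\sqrt{k^2(1-k^2)^2-(8A_0)^2}$. The limits are where the radicand vanishes, namely the roots of $k(1-k^2)=8A_0$; since $k-k^3$ increases from $0$ to its maximum $\tfrac{2\sqrt3}{9}$ at $k=1/\sqrt3$ and back to $0$ on $(0,1)$, the hypothesis $A_0<\sqrt3/36$ (equivalently $8A_0<\tfrac{2\sqrt3}{9}$) guarantees exactly two roots $0<\mu_1<\mu_2<1$, which are the desired limits. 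The only genuine care needed is the constant bookkeeping through the two changes of variables; guessing the square-root substitution is what makes the $k(1-k^2)$ form emerge.
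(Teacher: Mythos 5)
Your proposal is correct and follows essentially the same route as the paper's proof: apply Heron's formula with $s=\tfrac12$, pass to sum/difference-type coordinates in which $64A^2=\gamma((1-\gamma)^2-\delta^2)$ (the paper's $t=1-\gamma$, $w=\delta$), integrate out the slice width, and finish with the substitution $k=\sqrt{1-2c}$. Your write-up is in fact cleaner on the Jacobian bookkeeping and on verifying that the hypothesis $A_0<\sqrt{3}/36$ yields exactly two roots $\mu_1<\mu_2$, both of which the paper glosses over.
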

Note that the $\sqrt{3}/36$
corresponds to the equilateral triangle with side length $1/3$,
the largest area of a triangle with perimeter $1$.

\begin{proof}
	Define the three sides of the triangle as $a,b,c$
	where $a$ is the distance from the endpoint to $x$,
	$b$ is the distance from $x$ to $y$,
	and $c$ is the distance from $y$ to the other endpoint.
	We can say then that a triangle only forms when 
	\[ a \le \frac12,\;c \le \frac12,\;a+b \ge \frac12. \]
	Define $u=1-2a$ and $v=1-2b$,
	where a triangle is formed once again where $a$ and $b$ are $<0.5$
	and their sum is also greater than $0.5$,
	we already know this occurs with probability $0.25$.

	Now we go to Heron's formula
	\[ A = \sqrt{s(s-a)(s-b)(s-c)} \]
	which can be factorized using our new definitions of $u$ and $v$ as
	\[ 64A^2 = ((x+y)^2 - (x-y)^2)(1-x-y). \]
	Create the new variables $t=(u+v)$ and $w=(u-v)$,
	and note that \[ A \ge A_0 \iff w^2 \le p^2 - \frac{(8A_0)^2}{1-t}. \]
	We then define
	\[ f(t) = \sqrt{t^2 - \frac{(8A_0)^2}{1-t}} \]
	and let $\pi_1$, $\pi_2$ be the two roots of $f(p)$ in $(0,1)$.
	The condition above is equivalent to $\pi_1 \le t \le \pi_2$
	and $|w| \le f(p)$.
	Noting that $dt \; dw = 2du \; dv$, we conclude
	\[ \mathbb P(A) = \int_{\pi_1}^{\pi_2} \int_{-f(t)}^{f(w)} \; dt \; dw
		= 2\int_{\pi_1}^{\pi_2} f(t) \; dt. \]
	Change variables to $k = \sqrt{1-t}$,
	this becomes
	\[ \mathbb P(A>A_0)
		= 4\int_{\mu_1}^{\mu_2} \sqrt{k^2(1-k^2)^2 - (8A_0)^2} \; dk \]
	where $\mu_1$, $\mu_2$ are the roots of the polynomial $k(1-k^2) - 8A_c$.
\end{proof}

\subsection{Areas of $n$-gons}
Now suppose instead we break into $n \ge 4$ segments.
If these segments form an $n$-gon,
we can consider the $n$-gon with maximum area that
can be formed, and ask for its expected value.
This is equivalent to finding the area of the cyclic $n$-gon.

\begin{lemma}
	[\cite{maxquad}]
	\label{cyclicngon}
	For $n \ge 4$, the largest $n$-gon with
	a specified set of sides is cyclic.
\end{lemma}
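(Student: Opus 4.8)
The plan is to characterize the area-maximizing $n$-gon variationally and reduce it to the quadrilateral case. First I would fix an arbitrary cyclic ordering of the prescribed side lengths $a_1,\dots,a_n$ (assumed to satisfy the polygon inequality, so that some nondegenerate $n$-gon exists) and invoke compactness to guarantee that a maximizer $P$ exists: the space of $n$-gons with these side lengths, taken up to rigid motion, is compact, and area is a continuous function on it, so the supremum is attained. Next I would show $P$ is convex: if some interior angle were reflex, reflecting the offending boundary chain across the chord joining its endpoints preserves every side length while strictly enlarging the enclosed region, contradicting maximality. Thus I may assume $P$ is convex.

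The heart of the argument is a local-to-global reduction, and I claim it suffices to prove the quadrilateral case: among quadrilaterals with four prescribed sides in a fixed order, the cyclic one has the greatest area. Granting this, fix any four vertices $A,B,C,D$ of $P$ in cyclic order and freeze every interior angle of $P$ except those at $A,B,C,D$, so that the four boundary chains between consecutive chosen vertices become rigid. Because $P$ is convex, its area decomposes as the area of the quadrilateral $ABCD$ (whose sides are diagonals of $P$) plus four fixed \emph{cap} regions, one between each rigid chain and its chord. Only the quadrilateral term varies, so maximality of $P$ forces $ABCD$ to have maximal area among quadrilaterals with side lengths $|AB|,|BC|,|CD|,|DA|$, hence to be cyclic; therefore $A,B,C,D$ are concyclic. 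Since every four vertices of $P$ are concyclic and three noncollinear points already determine a circle, all $n$ vertices must lie on one common circle, i.e.\ $P$ is cyclic.

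It remains to settle the quadrilateral base case, which I would handle by Bretschneider's formula: a quadrilateral with sides $a,b,c,d$, semiperimeter $s$, and a pair of opposite angles $\alpha,\gamma$ has area $\sqrt{(s-a)(s-b)(s-c)(s-d) - abcd\cos^2\frac{\alpha+\gamma}{2}}$. With the four sides held fixed, this expression is maximized exactly when $\cos^2\frac{\alpha+\gamma}{2}=0$, that is, when $\alpha+\gamma=\pi$, which is precisely the condition for the quadrilateral to be cyclic.

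I expect the main obstacle to be making the decomposition ``area of $P$ equals area of $ABCD$ plus fixed caps'' fully rigorous, including verifying that the frozen chains remain disjoint and on the correct side of the diagonals as the four free angles vary, and ruling out degenerate configurations (collinear triples among $A,B,C,D$, or a maximizer sitting on the boundary of the configuration space). Convexity is exactly what tames these issues, so establishing it carefully is the crux; once the decomposition is justified, the reduction to Bretschneider's formula is routine.
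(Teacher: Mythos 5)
Your proposal is correct and follows essentially the same route as the paper: Bretschneider's formula settles the quadrilateral case, and the general case is handled by freezing the boundary chains between four chosen vertices and observing that only the inscribed quadrilateral's area varies, forcing every four vertices to be concyclic. You add some details the paper glosses over (existence of a maximizer by compactness, convexity via reflection across a chord), but the underlying argument is the same.
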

\begin{proof}
	We first prove the result when $n = 4$.
	By Bretschneider's formula, the area of a quadrilateral with side lengths $a, b, c, d$, semiperimeter $s$ and opposite angles $\alpha$ and $\gamma$ is
	\[ \sqrt{(s-a)(s-b)(s-c)(s-d)-abcd\cos^2\left(\frac{\alpha+\gamma}{2}\right)}. \]
    Since $a,b,c,d$ and thus also $s$ are fixed, we only need to minimize $\cos^2\left(\frac{\alpha+\gamma}{2}\right)$ to maximize the area. The minimum is achieved when $\cos^2\left(\frac{\alpha+\gamma}{2}\right)=0$, or equivalently, $\alpha + \gamma = \pi$, which implies that the quadrilateral is cyclic.

	Now assume $n > 4$.
	Assume for the sake of contradiction that there was a non-cyclic polygon that had the optimal area with fixed side lengths. Obviously it would be convex.
	\begin{center}
		\includegraphics{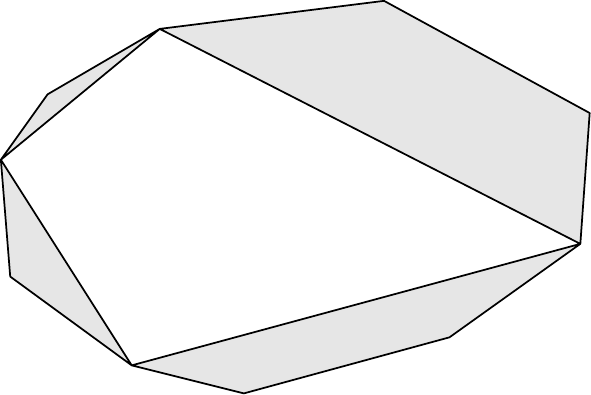}
	\end{center}
	Then, some quadrilateral which has vertices from the polygon would not be cyclic. Then, fixing the other parts of the polygon (corresponding to the shaded regions in the diagram) while adjusting the quadrilateral by fixing the side lengths and making it cyclic would increase the total area of the polygon --- a contradiction. Therefore, the claim is true.
\end{proof}

\begin{proposition}
	For $n=4$, the expected area of the cyclic quadrilateral
	(with maximal area)
	formed is $\frac{17\pi}{525}-\frac{\pi ^2}{160}.$
\end{proposition}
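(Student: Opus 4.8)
The plan is to combine Lemma~\ref{cyclicngon} with Brahmagupta's formula and then reduce the conditional expectation to a pair of Dirichlet-type integrals. Breaking $[0,1]$ at three uniform points produces side lengths $(a,b,c,d)$ distributed uniformly on the simplex $\{a,b,c,d>0,\ a+b+c+d=1\}$, and a convex quadrilateral can be assembled precisely when each side is shorter than the sum of the other three, i.e.\ when $\max(a,b,c,d)<\tfrac12$. A short inclusion--exclusion on the simplex shows this occurs with probability $\tfrac12$: each of the four events $\{a>\tfrac12\}$ has probability $(\tfrac12)^3=\tfrac18$, and no two can hold at once. By Lemma~\ref{cyclicngon} the maximal-area quadrilateral with these sides is the cyclic one, whose area is given by Brahmagupta's formula $A=\sqrt{(s-a)(s-b)(s-c)(s-d)}$ with $s=\tfrac12$. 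So the goal is to compute $\mathbb E[A\mid \max<\tfrac12]$.

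First I would change variables to $p=\tfrac12-a$, $q=\tfrac12-b$, $r=\tfrac12-c$, $w=\tfrac12-d$. This affine map is volume preserving, $p+q+r+w=1$, the area becomes simply $A=\sqrt{pqrw}$, and the quadrilateral condition $0<a,b,c,d<\tfrac12$ becomes $0<p,q,r,w<\tfrac12$. Working in the coordinates $(p,q,r)$ with $w=1-p-q-r$, the uniform density is constant and the quadrilateral region has volume $\tfrac1{12}$, so that
\[ \mathbb E[A\mid\text{quad}] = 12\,I_R, \qquad I_R=\int_R \sqrt{pqrw}\;dp\,dq\,dr, \]
where $R=\{p,q,r,w>0,\ p,q,r,w<\tfrac12\}$.

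Next I would strip off the upper constraints by inclusion--exclusion. Since at most one of $p,q,r,w$ can exceed $\tfrac12$, every multiple-overlap term vanishes and $I_R=I_{\mathrm{full}}-4J$, where $I_{\mathrm{full}}=\int_{p,q,r,w>0}\sqrt{pqrw}$ is a pure Dirichlet integral equal to $\Gamma(\tfrac32)^4/\Gamma(6)=\pi^2/1920$, and $J=\int_{w>1/2}\sqrt{pqrw}$. For $J$ I would slice by $\sigma=p+q+r$: scaling gives $\int_{p+q+r\le\sigma}\sqrt{pqr}=\sigma^{9/2}\,\Gamma(\tfrac32)^3/\Gamma(\tfrac{11}{2})$, and differentiating in $\sigma$, then integrating the remaining factor $\sqrt{1-\sigma}=\sqrt{w}$ against the slice, yields
\[ J = \frac{2\pi}{105}\int_0^{1/2}\sigma^{7/2}(1-\sigma)^{1/2}\,d\sigma. \]

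The main obstacle, and the only genuinely delicate step, is this incomplete Beta integral: because both exponents are half-integers its value is not a rational multiple of $\pi$ but a mixture of a rational multiple of $\pi$ and a pure rational. I would evaluate it with the substitution $\sigma=\sin^2\theta$, which turns it into $2\int_0^{\pi/4}\sin^8\theta\cos^2\theta\,d\theta$; expanding in multiple angles (or using a reduction formula) produces both the $\pi/4$-endpoint contribution and the rational pieces, giving $\tfrac{7\pi}{512}-\tfrac{17}{480}$. Substituting back, the factor $\tfrac{2\pi}{105}$ in front of $J$ converts the $\pi$-part of the Beta integral into a $\pi^2$ term and its rational part into the isolated multiple of $\pi$. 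Assembling $12(I_{\mathrm{full}}-4J)$ then combines $12I_{\mathrm{full}}=\tfrac{\pi^2}{160}$ with the $\pi^2$ contribution from $J$ to leave $-\tfrac{\pi^2}{160}$, while the lone $\pi$-term collects to $\tfrac{17\pi}{525}$, yielding the claimed value $\tfrac{17\pi}{525}-\tfrac{\pi^2}{160}$.
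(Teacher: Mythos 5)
Your proposal is correct, and it reaches the stated value by a genuinely different route from the paper. The paper parametrizes by the ordered breakpoints $x<y<z$, writes the conditional expectation as an explicit triple integral of Brahmagupta's expression over the region $x<\tfrac12$, $y-x<\tfrac12$, $z-y<\tfrac12$, $z>\tfrac12$, and grinds it out through a chain of trigonometric substitutions. You instead exploit the exchangeable structure: the side vector is uniform on the simplex, the substitution $p=s-a$, etc.\ turns Brahmagupta's formula into the symmetric monomial $\sqrt{pqrw}$ while preserving both the simplex and the uniform measure, and the constraint $\max<\tfrac12$ is peeled off by inclusion--exclusion (with no higher-order overlap terms). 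That reduces everything to the complete Dirichlet integral $\Gamma(\tfrac32)^4/\Gamma(6)=\pi^2/1920$ plus a single incomplete Beta integral, and I have checked the key evaluation $\int_0^{1/2}\sigma^{7/2}(1-\sigma)^{1/2}\,d\sigma=2\int_0^{\pi/4}\sin^8\theta\cos^2\theta\,d\theta=\tfrac{7\pi}{1024}\cdot 2-\tfrac{17}{960}\cdot 2=\tfrac{7\pi}{512}-\tfrac{17}{480}$, after which $12(I_{\mathrm{full}}-4J)$ does assemble to $\tfrac{17\pi}{525}-\tfrac{\pi^2}{160}$ exactly as you say. Your normalizations are also right: the quadrilateral event has probability $\tfrac12$, so the region has volume $\tfrac1{12}$ in the $(p,q,r)$ chart where the density is the constant $6$. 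The payoff of your approach is that it isolates the one genuinely transcendental computation and makes clear where the $\pi^2$ and $\pi$ terms come from (the complete Dirichlet integral and the rational part of the incomplete Beta function, respectively); the paper's direct integral is more self-contained but much harder to audit. If you write this up, spell out the intermediate identity $\int_{p+q+r\le\sigma}\sqrt{pqr}\,dV=\tfrac{4\pi}{945}\sigma^{9/2}$ before differentiating, since that is where the coefficient $\tfrac{2\pi}{105}$ comes from.
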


\begin{proof}
Let $x<y<z$ be the breaking points.
Then, the quadrilateral has side lengths $x, y-x, z-y,$ and $1-y$.
By symmetry, we can assume $y < \frac 1 2$.
Since the perimeter of the quadrilateral is $1$, the semiperimeter is $\frac 1 2$.

Since a quadrilateral is valid iff all side lengths are shorter than the semiperimeter,
we have $x < \frac 1 2, y-x < \frac 1 2, z-y < \frac 1 2$ and $z > \frac 1 2.$ 
By proposition \ref{cyclicngon}, the quadrilateral with maximal area is cyclic.

By Brahmagupta's formula, the expected value of the
area of this quadrilateral is then
\[ 
A = \frac{\int_{0}^{1/2} \int_0^y \int_{1/2}^{y+1/2} \sqrt{\left(\frac 12 - x\right)\left(\frac 12-y+x\right)\left(\frac 12-z+y\right)\left(z-\frac 12\right)} \dd z \dd x \dd y}{\int_{0}^{1/2} \int_0^y \int_{1/2}^{y+1/2} 1 \dd z \dd x \dd y}. \]

We get
\[ A = \frac{ \frac{17\pi}{12600}-\frac{\pi^2}{3840}}{\frac 1 {24}}
=  \frac{17\pi}{525}-\frac{\pi^2}{160}. \]

Evaluation of the integral:
    \begin{align*}
    \int_{0}^{1/2} &\int_0^y \int_{1/2}^{y+1/2} \sqrt{\left(\frac 12 - x\right)\left(\frac 12-y+x\right)\left(\frac 12-z+y\right)\left(z-\frac 12\right)} \dd z \dd x \dd y = \\
    &= \frac 14\int_0^{1/2} \int_0^y \sqrt{(1-2x)(2x-2y+1)} \int_{1/2}^{y+1/2} \sqrt{y^2-\left( 2z-y-1\right)^2} \dd z \dd x \dd y\xeq{u=2z-y-1} \\
    &= \frac 18 \int_0^{1/2}\int_0^y \sqrt{(1-2x)(2x-2y+1)} \int_{-y}^{y} \sqrt{y^2-u^2} \dd z \dd x \dd y\xeq{t=\arcsin\left(\frac uy\right)} \\
    &= \frac\pi{16}\int_0^{1/2} y^2\int_0^y \sqrt{(1-2x)(2x-2y+1)} \dd x \dd y = \\
    &= \frac\pi{16}\int_0^{1/2} y^2\int_0^y \sqrt{(y-1)^2 - (2x-y)^2} \dd x \dd y \xeq{u=2x-y} \\
    &= \frac{\pi}{32} \int_0^{1/2} y^2 \int_{-y}^y \sqrt{(y-1)^2-u^2} \dd u \dd y \xeq{v=\arcsin\left( \frac{u}{1-y}\right)} \\
    &= \frac{\pi}{32} \int_0^{1/2} y^2(y-1)^2 \int_{-\arcsin\left(\frac{y}{1-y}\right)}^{\arcsin\left(\frac{y}{1-y}\right)} \cos ^2 v \dd v \dd y = \\
    &= \frac{\pi}{32}\int_0^{1/2} y^2 (y-1)^2 \arcsin\left( \frac{y}{1-y}\right) \dd y + \frac{\pi}{32} \int_0^{1/2} y^3 \sqrt{1-2y} \dd y\xeq{w=y-1, a=\sqrt{-2w-1}} \\
    &= -\frac{\pi}{120} - \frac 1 {240} \int_0^{1/2} \frac{3a^{10}-10a^6+15a^2-8}{a^2+1} \dd a + \frac{\pi}{32} \int_0^{1/2} y^3 \sqrt{1-2y} \dd y\xeq{b=1-2y} \\
	&= \frac{17\pi}{12600}-\frac{\pi ^ 2}{3840}. \qedhere
    \end{align*}
\end{proof}

\subsection{Expected length and variance of $k$th smallest segment}
Another common question is to ask for the expected length
of the shortest or longest segment.
The result for the problem was claimed in \cite[Problem 6.4.2]{orderstats}
without a proof or reference.
The same problem was considered in \cite{orderSE} and \cite{lesreid}.

\begin{proposition}
	[\cite{orderSE}]
	The $k$th smallest stick has expected length equal to
	\[ \frac{\frac{1}{k} + \frac{1}{k+1} + \dots + \frac 1n}{n}. \]
\end{proposition}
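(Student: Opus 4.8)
The plan is to use the standard representation of uniform spacings through independent exponential random variables, which turns the problem into computing expectations of exponential order statistics and avoids any messy inclusion--exclusion. First I would fix the model: placing $n-1$ independent uniform points in $[0,1]$ yields $n$ segment lengths $(D_1,\dots,D_n)$ whose joint law is uniform on the simplex $\{\ell_i\ge 0,\ \sum_i \ell_i = 1\}$, i.e.\ the Dirichlet$(1,\dots,1)$ distribution. I would then bring in $E_1,\dots,E_n$ i.i.d.\ standard exponential and set $S=E_1+\dots+E_n$. The structural input is the classical fact that the normalized vector $(E_1/S,\dots,E_n/S)$ is itself Dirichlet$(1,\dots,1)$ and is independent of $S$, where $S$ is Gamma-distributed with mean $n$. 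Hence I may couple $(D_1,\dots,D_n)$ with $(E_1/S,\dots,E_n/S)$; since dividing by the positive number $S$ preserves order, the order statistics satisfy $D_{(k)} = E_{(k)}/S$, where $E_{(1)}\le\dots\le E_{(n)}$ are the ordered exponentials.

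Second, I would exploit this independence. Because $D_{(k)}$ is a function of the normalized vector alone, it is independent of $S$; rearranging $E_{(k)} = D_{(k)}\,S$ and taking expectations gives $\mathbb E[E_{(k)}] = \mathbb E[D_{(k)}]\,\mathbb E[S] = n\,\mathbb E[D_{(k)}]$. It therefore remains to compute $\mathbb E[E_{(k)}]$, for which I would invoke R\'enyi's representation: by memorylessness of the exponential, the consecutive gaps $E_{(i)}-E_{(i-1)}$ (with $E_{(0)}=0$) are independent, and $E_{(i)}-E_{(i-1)}$ is exponential with rate $n-i+1$, hence mean $1/(n-i+1)$. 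Summing, $\mathbb E[E_{(k)}] = \sum_{i=1}^{k} \frac{1}{n-i+1} = \frac1{n-k+1}+\dots+\frac1n$, and dividing by $n$ produces the harmonic-type sum in the statement.

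The main obstacle is establishing the two representation facts cleanly: the independence of the Dirichlet direction from its Gamma scale, and the R\'enyi gap decomposition. Both are standard but merit a short self-contained justification (a single Jacobian computation handles the first, and induction via memorylessness handles the second). One should also pin down the indexing carefully: the derivation gives the expected length of the $k$th smallest segment as $\frac1n\big(\frac1{n-k+1}+\dots+\frac1n\big)$, so the expression in the statement is to be read with the index counting from the largest segment (equivalently, replace $k$ by $n-k+1$); a one-line remark fixing this convention prevents confusion, since for $k=1$ the displayed formula returns the expected \emph{largest} segment.

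As an alternative route that avoids exponentials entirely, one could prove the volume lemma that any prescribed $m$ segments simultaneously exceed $x$ with probability $(1-mx)_+^{\,n-1}$, feed this through inclusion--exclusion to obtain $\mathbb P(D_{(k)}>x)$, and integrate using $\int_0^{1/j}(1-jx)^{n-1}\dd x = \frac{1}{jn}$. The remaining difficulty there is purely algebraic: collapsing the resulting alternating binomial sum back into the harmonic sum, which the exponential argument sidesteps.
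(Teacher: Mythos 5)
Your proof is correct, but it takes a genuinely different route from the paper's. The paper works directly on the simplex: writing the decreasingly ordered lengths as $s_1 \ge \dots \ge s_n$, it sets $x_i = s_i - s_{i+1}$, observes $\sum_i i x_i = 1$, and notes that the rescaled gaps $y_i = ix_i$ are (by an elementary volume-preserving linear change of variables) uniform on the simplex, so $\mathbb E[y_i] = 1/n$ by symmetry and the answer follows by linearity. Your argument instead imports the exponential machinery --- the Dirichlet$(1,\dots,1)$/Gamma independence of direction and scale, plus R\'enyi's gap decomposition of exponential order statistics --- and recovers the same harmonic sum from $\mathbb E[D_{(k)}] = \tfrac1n\mathbb E[E_{(k)}]$. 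The paper's route is shorter and entirely self-contained (one linear substitution and one symmetry observation), while yours carries two standard-but-nontrivial lemmas that you rightly say need justification; in exchange, your representation is more robust, since the same coupling and the independence of the gaps $E_{(i)}-E_{(i-1)}$ immediately give higher moments and covariances of the order statistics (which the paper's subsequent variance theorem has to compute by a separate density calculation). You are also right to flag the indexing: the paper's statement says ``$k$th smallest'' but its own proof orders $s_1 \ge s_2 \ge \dots \ge s_n$ decreasingly, so the displayed formula is really for the $k$th \emph{largest} piece (at $k=1$ it returns $H_n/n$, the expected maximum), exactly the discrepancy your re-indexing remark resolves.
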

\begin{proof}
	Without loss of generality,
	assume the segment is broken into segments of
	length $s_0 \ge s_2 \ge \dots \ge s_n$, in that order.
	We are given that $s_1 + \dots + s_n = 1$,
	and want to find the expected value of each $s_k$.

	Set $s_i = x_i + \dots + x_n$ for each $i = 1, \dots, n$, where $s_0 = 0$.
	Then, we have $x_1 + 2x_2 + \dots + nx_n = 1$,
	and want to find the expected value of $s_k = x_k + \dots + x_n$.

	However, if we set $y_i = i x_i$,
	then we have $y_1 + \dots + y_n = 1$,
	so by symmetry $\mathbb{E}[y_i] = \tfrac{1}{n}$ for all $i$.
	Thus, $\mathbb{E}[x_i] = \tfrac{1}{in}$ for each $i$,
	and now
	\[\mathbb{E}[s_k] = \mathbb{E}[x_k] + \dots + \mathbb{E}[x_n]
		= \tfrac{1}{n}(\tfrac{1}{k} + \dots + \tfrac{1}{n})\]
	as desired.
\end{proof}

\begin{theorem}
	The variance of the length of the $k$th longest stick is equal to
	\[ \sum_{i = k}^{n} \frac{1}{i^2} \frac{1}{n(n + 1)} - \frac{1}{n^2(n + 1)}\left(\frac{1}{k} + \dots + \frac{1}{n}\right)^2. \]
\end{theorem}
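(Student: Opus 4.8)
The plan is to build on the representation from the previous proposition, where the ordered lengths $s_1 \ge \dots \ge s_n$ are written as $s_k = x_k + \dots + x_n$ with $y_i = i x_i$, so that $s_k = \sum_{i=k}^n y_i/i$. Since the variance of $s_k$ is just the variance of a fixed linear combination of the $y_i$, I would compute it as
\[ \operatorname{Var}(s_k) = \sum_{i=k}^n \frac{1}{i^2}\operatorname{Var}(y_i) + \sum_{\substack{k \le i,j \le n\\ i\ne j}} \frac{1}{ij}\operatorname{Cov}(y_i,y_j), \]
reducing the whole problem to the first and second moments of the vector $(y_1,\dots,y_n)$.

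The essential input is therefore the joint distribution of $(y_1, \dots, y_n)$. The previous proof only used that this vector is exchangeable (enough to get $\mathbb{E}[y_i] = 1/n$), but for the variance I need its full second-order structure: $(y_1,\dots,y_n)$ is in fact uniformly distributed on the simplex $\{y_i \ge 0,\ \sum y_i = 1\}$, equivalently $\mathrm{Dirichlet}(1,\dots,1)$. I would justify this via the Rényi representation: writing the uniform spacings as $g_i = E_i/\sum_j E_j$ for i.i.d.\ exponentials $E_j$, the normalized gaps between consecutive exponential order statistics are again i.i.d.\ exponential, which yields exactly $y_i = D_{n-i+1}/\sum_l D_l$ for i.i.d.\ exponential $D_l$, the standard construction of a uniform point of the simplex. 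From the flat Dirichlet one reads off $\operatorname{Var}(y_i) = \frac{n-1}{n^2(n+1)}$ and $\operatorname{Cov}(y_i,y_j) = -\frac{1}{n^2(n+1)}$ for $i \ne j$. A shortcut that avoids quoting all Dirichlet moments is to note that $\sum_i y_i = 1$ is constant, so $0 = \operatorname{Var}(\sum_i y_i) = n\operatorname{Var}(y_1) + n(n-1)\operatorname{Cov}(y_1,y_2)$ by exchangeability; this pins down the covariance in terms of the single variance $\operatorname{Var}(y_1)$, which comes from the marginal $y_1 \sim \mathrm{Beta}(1,n-1)$.

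With these moments in hand, the remainder is bookkeeping. Writing $H = \sum_{i=k}^n 1/i$ and $Q = \sum_{i=k}^n 1/i^2$ and using the identity $\sum_{i\ne j} 1/(ij) = H^2 - Q$, the expression above collapses to
\[ \operatorname{Var}(s_k) = \frac{1}{n(n+1)}\,Q - \frac{1}{n^2(n+1)}\,H^2, \]
which is precisely the claimed formula. The one genuine obstacle is the distributional step: one must upgrade the mere exchangeability used for the mean to the statement that $(y_i)$ is uniform on the simplex (or at least identify the $\mathrm{Beta}(1,n-1)$ marginal), since without this the second moments are not determined. Everything after that is a one-line variance-of-a-sum computation.
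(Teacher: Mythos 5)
Your proposal is correct and follows essentially the same route as the paper: decompose $\operatorname{Var}(s_k)$ over the $y_i = i x_i$, obtain $\operatorname{Var}(y_1)$ from the $\mathrm{Beta}(1,n-1)$ marginal, and pin down the covariance from the constraint $\sum_i y_i = 1$ via exchangeability. The only real difference is that you explicitly justify the distributional input (uniformity of $(y_1,\dots,y_n)$ on the simplex via the R\'enyi representation), a step the paper asserts without proof when it writes down the density $(n-1)(1-x)^{n-2}$.
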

\begin{proof}
	We have
	\begin{align*}
	\operatorname{Var}[s_k] & = \operatorname{Var}[x_k + \dots + x_n]\\
	& = \sum_{k \le i \le n} \operatorname{Var}[x_i] + 2 \sum_{k \le i < j \le n} \mathbb{E}[x_i x_j] - \mathbb{E}[x_i] \mathbb{E}[x_j]\\
	& = \sum_{k \le i \le n} \frac{1}{i^2} (\mathbb{E}[y_i^2] - \mathbb{E}[y_i]^2) + 2 \sum_{k \le i < j \le n} \frac{1}{ij} (\mathbb{E}[y_i y_j] - \mathbb{E}[y_i] \mathbb{E}[y_j])\\
	& = \sum_{k \le i \le n} \frac{1}{i^2} \mathbb{E}[y_i^2] + 2 \sum_{k \le i < j \le n} \frac{1}{ij} \mathbb{E}[y_i y_j] - \frac{1}{n^2}\left(\frac{1}{k} + \dots + \frac{1}{n}\right)^2
	\end{align*}
	by a well-known property of variance.

	Now, by symmetry, it is enough to calculate $\mathbb{E}[y_1^2]$ and $\mathbb{E}[y_1y_2]$.

	We now calculate $\mathbb{E}[y_1^2]$. This is equal to $\int_0^1 x^2 f(x) \; dx$, where $f$ is the probability density function of $y_1$. We can show that this pdf is $f(x) = (n - 1)(1 - x)^{n - 2}$, so now $\mathbb{E}[y_1^2] = \int_0^1 (n - 1) x^2 (1 - x)^{n - 2} \; dx = \tfrac{(n - 1)2!(n - 2)!}{(n + 1)!} = \tfrac{2}{n(n + 1)}$.

	Finally, to calculate $\mathbb{E}[y_1y_2]$, observe that
	\[1 = \mathbb{E}[(y_1 + \dots + y_n)^2] = n\mathbb{E}[y_1^2] + n(n - 1)\mathbb{E}[y_1y_2] = \tfrac{2}{n + 1} + n(n - 1)\mathbb{E}[y_1y_2]\]
	which gives $\mathbb{E}[y_1y_2] = \tfrac{1}{n(n + 1)}$.

	Thus, the variance of $s_k$ is equal to
	\begin{eqnarray*}
	& & \sum_{k \le i \le n} \frac{1}{i^2} \frac{2}{n(n + 1)} + 2 \sum_{k \le i < j \le n} \frac{1}{ij} \frac{1}{n(n + 1)} - \frac{1}{n^2}\left(\frac{1}{k} + \dots + \frac{1}{n}\right)^2\\
	& = & \sum_{i = k}^{n} \frac{1}{i^2} \frac{1}{n(n + 1)} + \frac{1}{n(n + 1)}\left(\frac{1}{k} + \dots + \frac{1}{n}\right)^2 - \frac{1}{n^2}\left(\frac{1}{k} + \dots + \frac{1}{n}\right)^2\\
	& = & \sum_{i = k}^{n} \frac{1}{i^2} \frac{1}{n(n + 1)} - \frac{1}{n^2(n + 1)}\left(\frac{1}{k} + \dots + \frac{1}{n}\right)^2. \qedhere
	\end{eqnarray*}
\end{proof}

\section{Breaking a square}
\label{sec:square}
Moving on from spaghetti noodles,
we considered breaking a square $[0,1]^2$ randomly into parts.
(Perhaps this corresponds to a saltine cracker.)
In this case, it is not clear what is meant by a ``random line''
through the square anymore, and we investigated two directions.

\subsection{Breaking through a random line through the center}
An easier case to imagine is to divide the square
by drawing a random line through the center.
\begin{proposition}
	Consider in this section the distribution in which
	we draw a random line through the center. Then:
	\begin{enumerate}[(a)]
		\item There are always $2n$ regions.
		\item The expected number of triangles is
			\[  2n-4 + \left( \frac{1}{2} \right) ^{n-2}. \]
		\item For $n=1$ both regions have area $\frac12$,
		and when $n > 1$, with probability $1$ no region
		has area $\ge \frac12$.
	\end{enumerate}
\end{proposition}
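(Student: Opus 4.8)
The plan is to model the $n$ random lines through the center by angles $\theta_1,\dots,\theta_n$ drawn independently and uniformly from $[0,\pi)$, the angle $\theta_j$ giving the line through $(\tfrac12,\tfrac12)$ in direction $\theta_j$; with probability $1$ these are distinct. For (a) I would use that $n$ distinct concurrent lines cut the plane into $2n$ angular sectors at their common point, and that intersecting each sector with the square---convex, with the center in its interior---produces exactly one convex region, for $2n$ regions in all.

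For (c), the decisive remark is that the configuration is invariant under the point reflection $P\mapsto(1,1)-P$ about the center: the square is preserved and every line through the center is preserved, so the regions fall into antipodal pairs of equal area. When $n=1$ the two regions form exactly such a pair, so each has area $\tfrac12$. When $n>1$ I would first show every sector has angular width $<\pi$: the $2n$ rays are the angles $\{\theta_j,\theta_j+\pi\}$, so a gap of width $\ge\pi$ would force its antipodal gap to be $\ge\pi$ as well, leaving no room for the remaining rays and collapsing the lines---an event of probability $0$. Consequently no region equals its antipodal partner, and if some region had area $\ge\tfrac12$ then its disjoint partner would too, giving total area $\ge1$ and forcing every other region to have area $0$; since all $2n$ sectors have positive width, hence positive area with probability $1$, this is a contradiction.

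For (b), the main content, I would begin with the geometric fact that each side of the square subtends exactly $\tfrac\pi2$ at the center, so the four corners sit in the diagonal directions $\tfrac\pi4,\tfrac{3\pi}4,\tfrac{5\pi}4,\tfrac{7\pi}4$. For $n>1$ each region is a $(c+3)$-gon, where $c$ counts the corners whose direction lies strictly inside that region's sector: the captured arc of the square's boundary is split into $c+1$ segments by those corners, and with the two bounding rays---distinct edges, since the width is $<\pi$---this gives $c+3$ sides. Thus a region is a triangle precisely when its sector contains no corner, so writing $T$ for the number of triangles, $T=2n-(\text{number of sectors containing at least one corner})$.

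It then remains to compute the expected number of occupied sectors. Grouping the four corners by the sector they land in, this number equals the number of the four $90^\circ$ arcs between consecutive corner directions that contain at least one ray, by a cut-counting argument on the cycle of four corners (valid because at least one arc is always occupied). Because the rays come in antipodal pairs, the arc $(\tfrac\pi4,\tfrac{3\pi}4)$ and its antipode are occupied together, under the event $E_1=\{\exists\,j:\theta_j\in(\tfrac\pi4,\tfrac{3\pi}4)\}$, and the other two arcs under the complementary event $E_2$; each event has probability $1-(\tfrac12)^n$. Hence the expected number of occupied sectors is $2\,\mathbb{P}(E_1)+2\,\mathbb{P}(E_2)=4\bigl(1-(\tfrac12)^n\bigr)$, giving $\mathbb{E}[T]=2n-4+4(\tfrac12)^n=2n-4+(\tfrac12)^{n-2}$; the degenerate case $n=1$, where the two bounding rays are collinear so each region is a quadrilateral and $T=0$, is checked separately and agrees. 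The step I expect to be the main obstacle is precisely this reduction together with the antipodal bookkeeping of the rays: obtaining the probability $1-(\tfrac12)^n$ relies on seeing that each line occupies an arc and its antipode simultaneously, so the four arcs contribute through only the two events $E_1,E_2$ rather than four independent ones.
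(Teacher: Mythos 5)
Your proof is correct and reaches the stated formula, but it takes a noticeably different (and more detailed) route than the paper's, which is quite terse. For (a) the two arguments coincide. For (c) the paper simply observes that the first line cuts the square into two pieces of area exactly $\frac12$, and that for $n>1$ each half is further subdivided by the remaining (almost surely distinct) lines, so every piece is strictly smaller than $\frac12$; your central-symmetry argument with antipodal pairs of regions proves the same thing with a little more machinery, and both are sound. For (b) the paper skips your $(c+3)$-gon analysis entirely: it asserts that the number of triangles is either $2n-2$ (when all $n$ lines cross the same pair of opposite sides, an event of probability $2\left(\frac12\right)^n$) or $2n-4$ (otherwise), and takes the weighted average. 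Your version replaces this two-outcome case split by linearity of expectation over the four quarter-arcs between consecutive corner directions, each line occupying an antipodal pair of arcs with probability $\frac12$; the two computations are equivalent, since ``all four arcs occupied'' is exactly the complement of ``all lines hit the same pair of opposite sides,'' and both yield $2n-4+\left(\frac12\right)^{n-2}$. What your write-up buys is a justification of the triangle count that the paper only asserts --- the identification of a region's number of sides with the number of corners its sector captures, and the cut-counting reduction from occupied sectors to occupied arcs --- together with an explicit treatment of the degenerate case $n=1$; the cost is length.
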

\begin{proof}
	\begin{enumerate}[(a)]
		\item This is $2n$, as there are $2n$ angles at the center, and one region for each angle. The probability that two lines coincide is zero, as there is an infinite number of lines to choose from.
		\item Either there are $2n-4$ or $2n-2$ triangles
		(the latter case occurring if all lines hit 
		the same opposite pairs of sides).

		The probability of having $2n-2$ regions be triangles is $2\left( \frac{1}{2} \right)^{n} = \left( \frac{1}{2} \right)^{n-1}$.
		So the probability of having $2n-4$ regions is $1- \left( \frac{1}{2} \right)^{n-1}$.
		Thus, our expected value is $(2n-2) \left( \frac{1}{2} \right)^{n-1} + (2n-4) (1-\left( \frac{1}{2}\right) ^{n-1}) = 2n-4 + \left( \frac{1}{2} \right) ^{n-2}$.

		\item If $n=1,$ since all lines passing through the center of the square split the square in half, this probability is 1.
If $n>1,$ since the square was already split in half by the first line, each piece will be guaranteed to be less than $\frac 1 2,$ as each piece is further split by the additional lines.
	\end{enumerate}
\end{proof}

\begin{proposition}
	Assume $n \ge 3$.
	Among the $2n$ regions formed when we draw $n$ random lines through the center
	the expected area of a randomly selected triangular region is
	\[ \frac{n^3+2^{n+1}(2n^2-n+5)-9n-16}{(n+2)(n+1)(n-1)}. \]
\end{proposition}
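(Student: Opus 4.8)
The plan is to combine the fourfold symmetry of the square with an order‑statistics computation, and then to divide the expected total triangular area by the expected number of triangles already found in the previous proposition. Keep the square $[0,1]^2$ with centre $\left(\tfrac12,\tfrac12\right)$. Each of the $n$ lines is a uniformly random direction $\phi\in[0,\pi)$ and contributes two antipodal rays, so there are $2n$ rays and hence $2n$ wedge‑shaped regions. A wedge is a triangle exactly when its two bounding rays meet the \emph{same} side of the square, so it suffices to analyse the rays meeting the right‑hand side $\{x=1\}$, whose directions lie in $\left(-\tfrac\pi4,\tfrac\pi4\right)$, and to multiply the resulting expected area by $4$.

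First I would fix the distribution. A line sends a ray to the right side precisely when it is of ``horizontal'' type, which happens independently with probability $\tfrac12$; thus the number $L$ of rays meeting the right side is $\mathrm{Binomial}(n,\tfrac12)$, and conditioned on $L$ the hit points $(1,h_1),\dots,(1,h_L)$ are i.i.d. The triangles touching the right side are exactly the $L-1$ wedges between consecutive such rays, so there are none unless $L\ge 2$. The key simplification is that the wedge between two neighbouring rays hitting at heights $h_j<h_{j+1}$ has area $\tfrac14(h_{j+1}-h_j)$, since its base has length $h_{j+1}-h_j$ and the centre lies at horizontal distance $\tfrac12$ from that side. Summing over consecutive pairs telescopes, so the total triangular area on the right side equals $\tfrac14(h_{\max}-h_{\min})$, i.e.\ $\tfrac14$ times the range of the $L$ hit points.

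Consequently the expected total area $\mathbb{E}[S]$ of all triangular regions equals the expected range of the $L$ hit points (the factor $4$ from the four sides cancelling the $\tfrac14$, using that by symmetry all four sides contribute equally). I would evaluate this by conditioning on $L$, inserting the standard formula for the expected range of $L$ i.i.d.\ order statistics, and then summing the binomial series $\sum_{L}\binom{n}{L}2^{-n}\,\mathbb{E}[\text{range}\mid L]$ in closed form, with the $L\le 1$ terms vanishing automatically. Finally, reading ``the expected area of a randomly selected triangular region'' as the area of a fixed wedge conditioned on being a triangle, which by symmetry is $\mathbb{E}[S]/\mathbb{E}[T]$, I would divide by the expected number of triangles $\mathbb{E}[T]=2n-4+(1/2)^{n-2}$ from the preceding proposition and simplify the quotient to the claimed rational–exponential expression in $n$.

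The hardest part will be the order‑statistics step together with the binomial summation. One must pin down the exact law of the hit points (this is where the precise meaning of a ``random line through the centre'' enters, since a non‑uniform law would introduce transcendental $\arctan$‑type integrals rather than the stated $\pi$‑free answer), evaluate the sum $\sum_{L}\binom{n}{L}2^{-n}\,\mathbb{E}[\text{range}\mid L]$, and then carry out the algebraic simplification of $\mathbb{E}[S]/\mathbb{E}[T]$ into the stated closed form, keeping careful track of the low‑$L$ boundary cases in which no triangle is formed.
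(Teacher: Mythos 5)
Your geometric reduction is correct and is in fact a clean dual of the paper's: you compute the triangular area adjacent to each side directly as $\tfrac14$ times the range of the hit points on that side (the telescoping sum of $\tfrac14(h_{j+1}-h_j)$ over consecutive hits), whereas the paper computes the complementary non-triangular area, which consists of four ``cornered quadrilaterals'' when both pairs of opposite sides are hit and of two pentagons otherwise; both routes reduce to the same order statistics $\mathbb{E}[\min] = \tfrac{1}{k+1}$, $\mathbb{E}[\max]=\tfrac{k}{k+1}$ of the hit points, and one can check the two give the same conditional total area given the split $k$. Two caveats on this part: you must commit to the hit points being uniform on the side (the paper implicitly assumes the model ``choose a pair of opposite sides with probability $\tfrac12$ each, then a uniform point on that side''; a uniform angle would indeed produce $\arctan$-type integrals, as you suspect), and you should note that your binomial variable counts lines by \emph{type}, with each horizontal-type line hitting both the left and the right side.

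The genuine gap is the final normalization. You propose to compute $\mathbb{E}[S]/\mathbb{E}[T]$ with $\mathbb{E}[T]=2n-4+(1/2)^{n-2}$. The paper instead splits into the two cases $T=2n-4$ (both pairs of opposite sides hit) and $T=2n-2$ (all lines of one type), divides the expected triangular area arising in each case by the \emph{realized} number of triangles in that case, and adds --- i.e.\ it is computing a version of $\mathbb{E}[S/T]$, the area of a triangle chosen uniformly from the realized configuration. Since $T$ takes two distinct values with positive probability and is correlated with $S$, these two interpretations genuinely disagree: for $n=3$ one finds $\mathbb{E}[S]=\tfrac{3}{16}$ and $\mathbb{E}[T]=\tfrac52$, so your quantity is $\tfrac{3}{40}$, while $\mathbb{E}[S/T]=\tfrac{\mathbb{E}[S\mathbf{1}_{C_1}]}{2}+\tfrac{\mathbb{E}[S\mathbf{1}_{C_2}]}{4}=\tfrac{5}{64}$. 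So your plan as written computes a different quantity and will not simplify to the paper's expression; to follow the paper you must keep the case split and divide by $2n-4$ or $2n-2$ inside the case average rather than dividing once at the end. (Be warned also that the displayed closed form in the proposition exceeds $1$ for every $n\ge 3$ and so cannot literally be an area --- the paper's own algebra drops a factor of $2^{n+2}$ and contains further sign/index slips --- so you should not expect any correct computation to reproduce it verbatim.)
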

\begin{proof}
	Note that all the lines must pass through opposite sides of the square. Let us do casework on the number of lines that pass through the side $x=0$. 

	There is a probability of $\frac{\binom{n}{k}}{2^n}$ that exactly $k$ lines intersect the side $x=0$.

	We have two cases.

	\begin{itemize}
	\item \textbf{Case 1}: There does not exist a side that is not intersected by a line.

	Each line intersects exactly one of the two sides: $x=0, y=0$. Let the $k$ lines intersect $x=0$ at $(0,a_1), (0,a_2), \ldots (0,a_k)$, where $a_1\le a_2\le \ldots \le a_k$. Similarly, let the other $n-k$ lines intersect $y=0$ at $(b_1,0),(b_2,0),\ldots , (b_{n-k},0)$, where $b_1\le b_2\le \ldots \le b_{n-k}$. Consider the quadrilateral with vertices $(0,0),(0,a_1),(\frac{1}{2},\frac{1}{2}),(b_1,0)$. Call such a quadrilateral \emph{cornered}.
	The area of this quadrilateral is $\frac{a_1+b_1}{4}$.
	The area of the region that is not occupied by a triangle within the square is composed of 4 cornered quadrilaterals for this case.

	Let $x_i=a_i-a_{i-1}$, for $2\le i \le k$, let $x_1=a_1$, and let $x_{k+1}=1-a_k$. Since $x_1+x_2+\ldots + x_{k+1} = 1$, by symmetry the expected value of $x_1$ is $\frac{1}{k+1}$. So the expected value of $a_1$ is also $\frac{1}{k+1}$. Similarly, the expected value of $b_1$ is $\frac{1}{n-k+1}$.

	Hence, our expected value for $\frac{a_1+b_1}{4}$ is
	\begin{align*}
	\frac{1}{4}\sum\limits_{k=1}^{n-1} \frac{\binom{n}{k}}{2^n} \left( \frac{1}{k+1} + \frac{1}{n-k+1} \right)
	&= \frac{1}{2^{n+2}}\sum\limits_{k=1}^{n-1} \binom{n}{k} \left( \frac{1}{k+1} + \frac{1}{n-k+1} \right) \\
	&= \frac{1}{2^{n+2}}\sum\limits_{k=1}^{n-1} \frac{1}{n+1}\binom{n+1}{k+1} + \frac{1}{n+1}\binom{n+1}{k} \\
	&= \frac{1}{2^{n+2}(n+1)}\sum\limits_{k=1}^{n-1} \binom{n+2}{k+1} \\
	&= \frac{2^{n+2}-1-(n+2)-\binom{n+2}{n}-\binom{n+2}{n+1}-1}{2^{n+2}(n+1)} \\
	&= \frac{1}{n+1}\left(1 - \frac{n^2+3n+4}{2^{n+3}} \right).
	\end{align*}

	To account for all $4$ cornered quadrilaterals,
	we multiply this count to get $\frac{4}{n+1}\left(1 - \frac{n^2+3n+4}{2^{n+3}} \right)$.

	Since there are $2n-4$ triangles, the expected area of a triangle in this case is 
	\[ \frac{1-\frac{4}{n+1}\left(1 - \frac{n^2+3n+4}{2^{n+3}} \right)}{2n-4}=\frac{n^2+3n+2^{n+1}(n-3)+4}{2^{n+2}(n+1)(n+2)}. \]

	\item \textbf{Case 2}: The cornered quadrilaterals do not exist.

	So $k=0$ or $n$. Without the loss of generality, suppose no line intersects the side $y=0$. Let the $n$ lines intersect the side $x=0$ at points $(0,a_1), (0,a_2), \ldots (0,a_n)$. Then the area of the region not occupied by triangles consists of two pentagons. Let us find the expected area of the pentagon that contains the side $y=0$. Its vertices are $(0,0), (0,a_1), (\frac{1}{2}, \frac{1}{2}), (1,1-a_n), (1,0)$. Its area is equal to $\frac{a_1}{4} + \frac{1-a_n}{4} + \frac{1}{4}$.

	Similar to above, the expected value of $a_1$ and $1-a_n$ are both $\frac{1}{n+1}$. Hence, the expected area of the pentagon is $\frac{1}{2(n+1)}+\frac{1}{4}$. To account for both pentagons, we multiply this count by $2$ to get $\frac{1}{n+1} + \frac{1}{2}$.

	The probability of this happening is $\frac{2}{2^n} = \frac{1}{2^{n-1}}$, so the expected contribution for this case is $\frac{1}{2^{n-1}}\left( \frac{1}{n+1} + \frac{1}{2} \right)$. 

	Since there are $2n-2$ triangles, the expected area of a triangle is $\frac{1-\frac{1}{2^{n-1}}\left( \frac{1}{n+1} + \frac{1}{2} \right)}{2n-2} = \frac{2^n(n+1)-n-3}{2^{n+1}(n+1)(n-1)}$.
	\end{itemize}
	Combining both cases, the expected area of a triangle is 
	\[ \frac{n^3+2^{n+1}(2n^2-n+5)-9n-16}{(n+2)(n+1)(n-1)}. \qedhere \]
\end{proof}

\subsection{Randomly join two points on the perimeter}
A harder variant is to imagine the dividing line is formed
by taking two randomly selected points on the perimeter of $[0,1]^2$,
and drawing the line joining them.
(In particular, there is $1/4$ chance that the dividing line
is along a side of the square, and thus has no effect.)
In this case, it is still possible to compute
the expected number of regions.

\begin{proposition}
The expected number of regions is
\[ \frac{17}{64} \binom{n}{2} + \frac34 n + 1. \]
\end{proposition}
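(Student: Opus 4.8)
The plan is to reduce the count to two expectations through a deterministic region-counting identity, after which the whole problem concentrates into a single pairwise crossing probability.

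First I would record the identity
\[ \#\{\text{regions}\} = 1 + E + I, \]
where $E$ is the number of \emph{effective} chords (those whose two endpoints lie on different sides, so that the chord actually passes through the interior) and $I$ is the number of interior intersection points of the chords. This is proved by inserting the chords one at a time: a degenerate chord lies along a side and changes nothing, while an effective chord meeting the current arrangement in $c$ interior points splits $c+1$ regions and so raises the count by $1+c$. Summing gives $1 + \sum_{\text{effective}}(1+c_i) = 1 + E + I$. With probability $1$ all endpoints are distinct and no three chords are concurrent, so this incremental count is exact.

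Taking expectations, $\mathbb E[\#\text{regions}] = 1 + \mathbb E[E] + \mathbb E[I]$. A single chord is degenerate precisely when its two endpoints fall on the same side, which occurs with probability $4\cdot(1/4)^2 = 1/4$; hence each chord is effective with probability $3/4$ and $\mathbb E[E] = \frac34 n$. Similarly $\mathbb E[I] = \binom n2\,p$, where $p$ is the probability that two independent random chords cross in the interior. This already accounts for the $\frac34 n + 1$ part of the claimed formula, so everything reduces to proving $p = \frac{17}{64}$.

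To compute $p$, the characterization I would use is that two chords of the square cross in the interior if and only if both are effective and their four endpoints alternate in cyclic order around the perimeter. I would begin from the classical fact that, for any four i.i.d.\ points on the perimeter, the two chords alternate with probability exactly $\frac13$: conditioned on the four positions, the partition of the points into the two chords is uniform over its three possibilities, exactly one of which alternates. The crux---and the main obstacle---is that alternation alone does \emph{not} guarantee an interior crossing, since a degenerate chord sits on a side and meets the other chord only on the boundary even when the endpoints interleave. Thus I must subtract the probability of the event ``alternate, but at least one chord degenerate.''

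I would evaluate this correction by inclusion--exclusion. For ``chord $1$ degenerate and alternating,'' condition on its endpoints lying on a common side at arc-distance $\delta$, which has density $2(1-\delta)$ on $[0,1]$; alternation then forces exactly one endpoint of chord $2$ into the enclosed arc, an event of conditional probability $\frac{\delta(4-\delta)}{8}$, and integrating gives $\frac14\cdot\frac{7}{48} = \frac{7}{192}$, with the symmetric value for chord $2$. For ``both degenerate and alternating,'' all four points must lie on one common side, of probability $4\cdot(1/4)^4 = \frac{1}{64}$, and then interleave along it with probability $\frac13$, contributing $\frac{1}{192}$. Inclusion--exclusion gives $\frac{7}{192} + \frac{7}{192} - \frac{1}{192} = \frac{13}{192}$, whence $p = \frac13 - \frac{13}{192} = \frac{51}{192} = \frac{17}{64}$, which finishes the computation.
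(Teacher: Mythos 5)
Your proof is correct, but it reaches the formula by a genuinely different route than the paper. The paper draws the full planar graph (boundary included), computes the expected numbers of vertices ($\frac{17}{64}\binom n2 + 2n + 4$) and edges ($\frac{17}{32}\binom n2 + \frac{11n}{4} + 4$), and applies Euler's formula $V+F-E=2$; you instead use the incremental identity $\#\{\text{regions}\} = 1 + E + I$, which dispenses with the edge count entirely and isolates the two quantities that matter. The two arguments also differ in how they obtain the crossing probability $\frac{17}{64}$: the paper computes it as $\frac{51}{64}\cdot\frac13$, where $\frac{51}{64}$ is the probability that no three of the four endpoints lie on a common side (which is exactly the condition for the ``diagonal'' pairing to produce an interior crossing), while you compute it as $\frac13 - \frac{13}{192}$, subtracting from the alternation probability an inclusion--exclusion correction for degenerate chords, at the cost of a short integral against the density $2(1-\delta)$. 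These are two decompositions of the same event --- given alternation, ``some chord is degenerate'' coincides with ``some three endpoints share a side'' --- so it is reassuring that both give $\frac{51}{192}=\frac{17}{64}$. Your version buys a more self-contained and arguably more transparent derivation of the $1 + \frac34 n$ terms (each as a direct expectation rather than as a residue of the Euler bookkeeping); the paper's version gets the crossing probability with less computation. One small point worth making explicit in your writeup: the equivalence ``interior crossing $\iff$ both chords effective and endpoints alternate'' uses the observation that if both chords are non-degenerate then no three of the four endpoints can share a side, so the four points genuinely form a convex quadrilateral whose diagonals meet in the open interior; you use this implicitly and it is true, but it deserves a sentence.
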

\begin{proof}
	We mark all $2n$ points which are endpoints of a square,
	and draw in all the dividing segments not contained
	in the side of any square.
	Then, mark the intersections of any two dividing segments.
	We can then consider a planar graph
	whose vertices are all the marked points
	(including the $2n+4$ on the boundary of the square)
	and whose edges are all the drawn segments
	between pairs of vertices
	(including the $2n+4$ segments forming the boundary of the square).
	We will use Euler's formula $V+F-E=2$,
	where $V$, $E$, and $F$ are the expected number
	of vertices, edges, and faces, respectively.

	We begin by computing the expected number of vertices. There are $4$ vertices from the square and $2n$ vertices from the endpoints of each line (the probability of two lines sharing the same vertex is zero).

	Now, we find the expected number of vertices inside of the square. By linearity of expectation, this quantity is equal to the probability of two lines intersecting times the total number of pairs of lines. There are $\binom{n}{2}$ pairs of lines. Given the endpoints of two lines, the probability that they could intersect inside the square is equal to the probability that no three endpoints are collinear (on a side of the square).

	To compute this, we apply complementary counting and find the probability that at least 3 endpoints are on one side. We have two cases. There is a $\binom{4}{3} \left( \frac{1}{4} \right)^2 \left( \frac{3}{4} \right) = \frac{3}{16}$ probability of having 3 points on one side of the square and 1 point is on the another. There is a probability of $\left( \frac{1}{4} \right)^3 = \frac{1}{64}$ of having all 4 points on the same side. So the probability of it being possible to make the lines intersect is $1 - ( \frac{3}{16} + \frac{1}{64} ) = \frac{51}{64}$. The probability that the lines are the diagonals of the quadrilateral is $\frac{1}{3}$, because when we take a vertex of the quadrilateral and connect it to another vertex, there are 3 other vertices, but only 1 is good. Hence, the probability that a pair of given lines intersect is $\frac{51}{64} \cdot \frac{1}{3} = \frac{17}{64}$. So the expected number of vertices is $\frac{17}{64} \binom{n}{2} + 2n + 4$.

	Let us now compute the expected number of edges. We start out by finding the number of edges that are on the boundary of the square. For each point we add to the perimeter, we add one more edge. Initially, we had 4 sides. If we have $n$ lines, then we have $2n$ points. So the number of edges that are on the boundary is $2n+4$.

	Now we compute the expected number of edges inside the square. For each pair of lines that intersect inside the square, they add $2$ new edges. The expected number of edges we start out with inside of the square is $\frac{3n}{4}$ (because $n$ lines). The expected number of intersection points inside of the square is $\frac{17}{64}\binom{n}{2}$. Therefore, the expected number of edges inside the square is $\frac{3n}{4}+ 2 \cdot \frac{17}{64} \binom{n}{2} = \frac{3n}{4} + \frac{17}{32} \binom{n}{2}$. 

	Adding both cases up, we have an expected total of $\frac{17}{32} \binom{n}{2} + \frac{11n}{4}+4$ edges.

		Plugging $V=\frac{17}{64} \binom{n}{2} + 2n + 4$ and $E=\frac{17}{32} \binom{n}{2} + \frac{11n}{4} +4$ into $V+F-E=2$, we get that $F=\frac{17}{64} \binom{n}{2} + \frac{3n}{4} +2$. However, we need to subtract out 1 for the face outside the square. Thus, our desired answer is $\frac{17}{64} \binom{n}{2} + \frac{3n}{4} + 1$.
\end{proof}

A possible direction to continue is to investigate:
\begin{ques}
	What is the expected area of the largest piece
	formed by this procedure?
\end{ques}
\begin{remark}
If $n=1$, the expected value is $\frac{5}{6}$.
There is a $\frac{2}{3}$ probability that the line forms a triangle and a pentagon.
The expected area of the triangle is $\frac{1}{8}$,
so the expected area of the largest region is $\frac{7}{8}$.
There is a $\frac{1}{3}$ probability that the line forms two trapezoids.
Let the two bases of the trapezoid have lengths $a_1$ and $b_1$.
Then, we want to find the expected value of the
$\frac{\max(a_1+b_1,(1-a_1)+(1-b_1))}{2}
	= \frac{\max(a_1+b_1,2-(a_1-b_1))}{2} = \frac{3}{4}$.
This gives $\frac{2}{3} \times \frac{7}{8}
+ \frac{1}{3} + \frac{3}{4} = \frac{5}{6}$.
\end{remark}

One can find some exponential upper and lower bounds in this sense.
\begin{proposition}
	Let $\lambda \approx 0.345$ be the probability
	that a randomly selected line does not
	intersect the disk centered at the center of the square,
	with radius $(2\pi)^{-\half}$.
	Then the probability some piece has area at least $\half$
	after $n$ random lines is at least $\lambda^n$.
\end{proposition}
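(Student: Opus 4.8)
The plan is to exploit the fact that the disk $D$ in the statement has area exactly $\half$. Writing $r = (2\pi)^{-\half}$ for its radius, we have $\pi r^2 = \pi/(2\pi) = \half$, and since $r \approx 0.399 < \half$, the disk $D$ lies strictly inside the square without touching any side. The entire argument rests on one geometric observation: a dividing segment (a chord joining two perimeter points) that does not meet $D$ leaves $D$ entirely on one of its two sides. Consequently, if \emph{none} of the $n$ random chords meets $D$, then $D$ is never cut by the arrangement, so it lies inside a single region of the resulting partition; that region then has area at least $|D| = \half$.

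First I would make this reduction precise. Let $E_i$ be the event that the $i$th random chord is disjoint from $D$, let $E = \bigcap_{i=1}^n E_i$, and let $S$ be the event that some region has area at least $\half$. The observation above gives $E \subseteq S$, so $\mathbb P(S) \ge \mathbb P(E)$.

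Next I would evaluate $\mathbb P(E)$. Each chord is determined by two independently chosen perimeter points, and the $n$ chords are drawn independently of one another, so the events $E_1, \dots, E_n$ are mutually independent and each has probability $\lambda$ by the very definition of $\lambda$. Hence $\mathbb P(E) = \lambda^n$, and combining with $\mathbb P(S) \ge \mathbb P(E)$ yields the claimed bound $\mathbb P(S) \ge \lambda^n$.

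The substantive content is the geometric observation, and the only point needing care is the claim that $D$ ends up in a single region, which one verifies by induction on the number of chords: each chord disjoint from $D$ leaves the current region containing $D$ either untouched or split so that $D$ stays within one half, so it is never subdivided. I do not anticipate a real obstacle here. Note that $\lambda$ itself need not be recomputed, as it is given as a definition; it is worth observing only that the radius is chosen so that $|D| = \half$ exactly, which is the smallest disk certifying a region of area $\ge \half$ and hence (being the easiest such disk to avoid) gives the strongest bound this method can produce.
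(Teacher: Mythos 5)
Your proof is correct, and it supplies exactly the argument the paper leaves out: this proposition is stated in the source without any proof, but the choice of radius $(2\pi)^{-\half}$ (so that the disk has area exactly $\half$ while still fitting strictly inside the square) makes your reduction --- no chord meets the disk, hence the disk survives in a single cell of area at least $\half$, and the $n$ independent chords all miss it with probability $\lambda^n$ --- plainly the intended one. The one point worth a sentence of care, that a connected set disjoint from all the cutting segments lies in a single region of the arrangement, is handled adequately by your closing remark.
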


\begin{proposition}
	The probability that some piece has area at least $\half$
	after $n$ random lines is at most $3(11/12)^{n/2}$.
\end{proposition}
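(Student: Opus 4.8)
The plan is to reduce, by a symmetry argument, to a statement about a single distinguished cell, and then to exploit independence across disjoint pairs of lines.

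The starting observation is that $[0,1]^2$ is centrally symmetric about its centre $O=(\half,\half)$, so every line through $O$ cuts the square into two pieces of area exactly $\half$ (the $180^\circ$ rotation about $O$ fixes the line and interchanges the two sides). Since each of the $n$ chords joins two boundary points and hence crosses the whole square, all of the resulting regions are convex; moreover a convex region $R$ of area at least $\half$ must contain $O$, for otherwise a line through $O$ separating $O$ from $R$ would confine $R$ to one open half and force $\operatorname{area}(R)<\half$. With probability $1$ no region has area exactly $\half$, so almost surely the unique cell $R_O$ containing $O$ is the only possible large region, and
\[ \mathbb P(\text{some region has area}\ge\tfrac12)=\mathbb P\big(\operatorname{area}(R_O)\ge\tfrac12\big). \]

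Next I would pair the lines. Let $H_i$ be the closed side of the $i$-th chord that contains $O$, so $R_O=[0,1]^2\cap\bigcap_{i=1}^n H_i$. For each $j=1,\dots,\lfloor n/2\rfloor$ we have $R_O\subseteq [0,1]^2\cap H_{2j-1}\cap H_{2j}$, the $O$-cell of the two chords in pair $j$; hence $\operatorname{area}(R_O)\ge\half$ forces every two-chord $O$-cell to have area at least $\half$. These events involve disjoint, independent pairs of chords, so
\[ \mathbb P\big(\operatorname{area}(R_O)\ge\tfrac12\big)\le P_2^{\,\lfloor n/2\rfloor},\qquad P_2:=\mathbb P\big(\text{the }O\text{-cell of two random chords has area}\ge\tfrac12\big). \]
It therefore suffices to prove the single inequality $P_2\le\tfrac{11}{12}$: then $P_2^{\lfloor n/2\rfloor}\le 3\,(11/12)^{n/2}$, the constant $3$ comfortably absorbing the floor in the exponent as well as the degenerate cases $n\le 1$, where the probability is genuinely $1$.

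The core of the argument, and the step I expect to be the main obstacle, is the two-chord estimate $P_2\le\tfrac{11}{12}$, equivalently that two random chords trap $O$ in a region of area below $\half$ with probability at least $\tfrac1{12}$. The reason a single chord never suffices is the same symmetry fact: its $O$-side always has area strictly greater than $\half$, so at least two chords are needed, and the favourable configurations are those in which the two chords lie on opposite sides of $O$ and pass close to it, trapping $O$ in a thin slab. For instance, if both chords join the left and right edges, one at heights in $(\half,\tfrac34)$ and the other at heights in $(\tfrac14,\half)$, then the $O$-cell is exactly the slab between them and has area less than $\tfrac34-\tfrac14=\half$; a more generous event of the same type is that both chords cross a fixed central disk while placing $O$ between them. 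The difficulty is entirely quantitative: any single cleanly-described straddling family has probability well under $\tfrac1{12}$, so to reach the stated constant one must either select a sufficiently rich geometric event or evaluate $P_2$ by integrating the uniform law of the four perimeter endpoints directly, and then check that the value does not exceed $\tfrac{11}{12}$.
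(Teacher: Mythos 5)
Your reduction to the cell containing the centre $O$ is correct and clean (a convex region of area exceeding $\half$ in the square must contain $O$, since otherwise a supporting line through $O$ would confine it to a half-square of area $\half$), and the pairing step $\mathbb P[\operatorname{area}(R_O)\ge\half]\le P_2^{\lfloor n/2\rfloor}$ is valid by independence across disjoint pairs of chords. But the argument stops exactly where the real work begins: you never establish $P_2\le\frac{11}{12}$, and you yourself concede that every cleanly described straddling event has probability well under $\frac1{12}$. That is not a presentational loose end; it is the entire quantitative content of the proposition. Rough estimates of the natural favourable configurations (two opposite-side chords straddling $O$ within a slab of area less than $\half$, plus the analogous mixed and crossing cases) come out on the order of a few percent, so it is not even clear that $P_2\le\frac{11}{12}$ is true; if it is not, your decomposition cannot produce the stated constant at all. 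As written, the proof has a genuine gap at its core.

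The paper's route avoids ever needing a two-chord estimate. It marks two ``horizontal'' half-sides and two ``vertical'' half-sides of the square so that a single chord joins the two horizontal marked half-sides with probability $\frac1{12}$ (likewise for the vertical pair), and so that one chord of each of these two types already forces every piece to have area below $\half$. Splitting the $n$ chords into the first $\lfloor n/2\rfloor$ and the last $\lceil n/2\rceil$ makes the events ``a chord of the first type appears in the first block'' and ``a chord of the second type appears in the second block'' independent, each failing with probability $(\tfrac{11}{12})^{\lfloor n/2\rfloor}$ or $(\tfrac{11}{12})^{\lceil n/2\rceil}$, and the union bound on the two failures gives $3(\tfrac{11}{12})^{n/2}$. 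So the paper's $\frac{11}{12}$ is a \emph{per-chord} failure probability for membership in an explicit $\frac1{12}$-probability class of chords, whereas your scheme demands a \emph{per-pair} trapping probability of at least $\frac1{12}$ --- a genuinely stronger and unverified claim. To salvage your approach you would either have to compute $P_2$ honestly (a four-variable integral over the perimeter measure), or imitate the paper by replacing the single per-pair event with two per-chord events whose conjunction kills all large pieces.
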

\begin{proof}
	Mark four parts of the square as shown below.
	\begin{center}
		\includegraphics{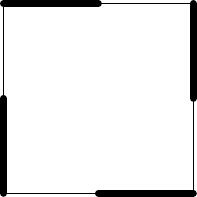}
	\end{center}
	The probability that some segment joins the two horizontal bold half-sides
	is $\frac{1}{12}$; likewise for the vertical bold half-sides.
	However, if two such segments are drawn, then
	certainly no piece can have area exceeding $\half$.

	Now, observe that
	\begin{itemize}
		\item The probability that some segment
		joining the horizontal bold half-sides
		is drawn in the first $\left\lfloor n/2 \right\rfloor$ cuts is at least
		$\alpha := 1 - (\frac{11}{12})^{\left\lfloor n/2 \right\rfloor}$.
		\item The probability that some segment
		joining the vertical bold half-sides
		is drawn in the last $\left\lceil n/2 \right\rceil$ cuts is at least
		$\beta := 1 - (\frac{11}{12})^{\left\lceil n/2 \right\rceil}$.
	\end{itemize}
	This gives an overall upper bound of
	\begin{align*}
		1 - \alpha\beta
		&= 1 - \left[ 1 - \left( \frac{11}{12} \right)^{\left\lfloor n/2 \right\rfloor} \right]
		\left[ 1 - \left( \frac{11}{12} \right)^{\left\lceil n/2 \right\rceil} \right] \\
		&= \left( \frac{11}{12} \right)^{\left\lfloor n/2 \right\rfloor}
		+ \left( \frac{11}{12} \right)^{\left\lceil n/2 \right\rceil}
		- \left( \frac{11}{12} \right)^n < 3\left( \frac{11}{12} \right)^{n/2}. \qedhere
	\end{align*}
\end{proof}

\section{The number of $k$-gons}
\label{sec:numtri}

This chapter addresses a combinatorial generalization:
for any $3 \le k \le n$ and $0 \le m \le \binom nk$
let $P(k,n,m)$ be the probability of being able to form $m$ or more $k$-gons
after breaking the stick into $n$ pieces.

\subsection{Some known results}
The original broken stick problem is $P(3,3,1) = \frac14$
in this notation.
In addition, in \cite{noodle} and \cite{illinois}
\begin{align*}
	P(n,n,1) &= 1 - \frac{n}{2^{n-1}} \\ % noodle
	P(3,n,1) &= 1 - \prod_{k=2}^n k(F_{k+2}-1)^{-1} \\ % illinois
	P\left(3,n,\binom n3\right) &= \frac{1}{\binom{2n-2}{n}}. % illinois
\end{align*}
Here $F_k$ denotes the $k$th Fibonacci number.

It would be interesting to compute $P(k,n,m)$ for more triples.
We give a few basic results below.

\subsection{Nonvanishing of $P(k,n,m)$}
We also have the following nonvanishing result.
\begin{proposition}
	Suppose $[0,1]$ is broken randomly into $n$ pieces.
	The probability of being able to form exactly $m$ $k$-gons is nonzero
	for all $0 \le m \le \binom nk$.
	In other words, $P(k,n,0) > 0$ and $P(k,n,m) - P(k,n,m-1) > 0$
	whenever $0 < m \le \binom{n}{k}$.
\end{proposition}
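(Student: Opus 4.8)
The plan is to interpret the statement combinatorially and reduce it to a question about a single integer-valued function on the space of configurations. Write a break configuration as $(\ell_1,\dots,\ell_n)$ with $\ell_i>0$ and $\sum_i \ell_i = 1$; since the stick is broken uniformly at random, the induced distribution has a density that is positive throughout the open simplex $\Delta$, so every nonempty relatively open subset of $\Delta$ has positive probability. A fixed $k$-subset $S$ of the pieces forms a nondegenerate $k$-gon precisely when its largest length is strictly less than the sum of the other $k-1$, that is, when $2\max_{i\in S}\ell_i < \sum_{i\in S}\ell_i$. Let $g(\ell)$ be the number of the $\binom nk$ subsets $S$ for which this strict inequality holds. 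It then suffices to show that for every integer $0 \le m \le \binom nk$ there is a configuration $\ell^{(m)}\in\Delta$ satisfying all $\binom nk$ polygon inequalities strictly (none being an equality) with $g(\ell^{(m)}) = m$; openness of the strict inequalities produces a neighborhood on which $g \equiv m$, hence a set of positive probability.

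The two extreme values are immediate. Taking all pieces equal, $\ell_i = 1/n$, every $k$-subset satisfies $2/n < k/n$ (here $k \ge 3$), so $g = \binom nk$. Taking a super-increasing configuration, e.g.\ $\ell_i = 2^{n-i}/(2^n-1)$, we have $\ell_i > \sum_{j>i}\ell_j$ for each $i$, so in any subset the largest element already exceeds the sum of all the remaining pieces of the whole stick, a fortiori the sum of the other members of the subset; thus no subset forms a $k$-gon and $g = 0$. Both points lie in the interior of $\Delta$ and satisfy every polygon inequality strictly.

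To reach the intermediate values I would use a discrete intermediate-value argument. The function $g$ is constant on each connected component of the complement of the union $H = \bigcup_S H_S$ of the hypersurfaces $H_S = \{\ell\in\Delta : 2\max_{i\in S}\ell_i = \sum_{i\in S}\ell_i\}$, across which the single subset $S$ toggles its status. Choose any path in $\Delta$ from the all-equal configuration to the super-increasing one, and perturb it, keeping its endpoints fixed, so that it crosses $H$ only at smooth points of a single $H_S$ at a time and meets none of the pairwise intersections $H_S \cap H_{S'}$ (these have codimension at least two, since distinct subsets yield genuinely different piecewise-linear equations). At each such transversal crossing exactly one subset changes status, so $g$ changes by exactly $\pm 1$; starting at $\binom nk$ and ending at $0$, it must therefore take every integer value in between, each of them at a point of the path lying off $H$.

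The main obstacle is precisely this genericity step: arguing that the path can be chosen to cross the arrangement $\{H_S\}$ one sheet at a time, so that each crossing changes $g$ by a single unit rather than jumping past some value of $m$. This requires checking that the locus where two or more subsets are simultaneously degenerate, together with the non-smooth creases of each $H_S$ (where the identity of the maximal element switches), forms a set of codimension at least two in $\Delta$, which a generic perturbation of the path avoids; away from it each crossing is governed by one strict linear inequality flipping sign. Once this is in place the argument is complete: every value $m$ is realized at an interior configuration with all inequalities strict, hence on an open set of positive probability, which establishes that the probability of being able to form exactly $m$ $k$-gons is positive for each $0 \le m \le \binom nk$, as claimed.
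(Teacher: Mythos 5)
Your argument is correct in outline, but it takes a genuinely different route from the paper. The paper proceeds by induction on $n$: for $m \le \binom{n-1}{k}$ it makes one piece so large that it cannot participate in any $k$-gon and invokes the inductive hypothesis on the rest, and for larger $m$ it constructs lengths $a_1 \le \dots \le a_{n-1}$ for which all $\binom{n-1}{k}$ old subsets form $k$-gons and the $\binom{n-1}{k-1}$ sums of $(k-1)$-subsets are pairwise distinct, then slots the new largest piece between two consecutive such sums to dial in exactly how many of the new subsets succeed. Your approach instead fixes the two extreme configurations (all pieces equal gives $g=\binom nk$; a super-increasing sequence gives $g=0$) and runs a discrete intermediate-value argument along a generic path through the arrangement of degeneracy hypersurfaces $H_S$. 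What the paper's construction buys is explicitness and the avoidance of any transversality language, at the cost of a somewhat fiddly double induction; what yours buys is conceptual economy, at the cost of the general-position step you correctly flag as the crux. That step does go through: each $H_S$ is a finite union of polyhedral pieces cut out by $\ell_j = \sum_{i \in S\setminus\{j\}} \ell_i$ for $j \in S$, and for $k \ge 3$ one checks (by comparing coefficient patterns modulo the relation $\sum_i \ell_i = 1$) that distinct pairs $(S,j)$ give distinct hyperplanes in the simplex, so all pairwise intersections and all creases where the maximum switches have codimension at least two; a generic path then meets only one smooth sheet at a time, transversally, and $g$ changes by exactly $\pm 1$ at each crossing. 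Since each attained value occurs at a point where all $\binom nk$ inequalities are strict, it persists on an open set of positive measure, which is exactly the conclusion needed. I would only ask you to write out the coefficient comparison that rules out two sheets coinciding (note it genuinely uses $k \ge 3$), after which the proof is complete.
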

\begin{proof}
	Fix $k$.  Let us prove by induction on $n$ that all values $m = 0, \dots, \binom nk$
	are achievable.

	The base case of $n=k$ is true. In fact, we know that $P(n,n,1)=1-\frac{n}{2^{n-1}}$.

	For the inductive step, let us assume that for some positive integer $n=j_1$ all values from $m=0$ to $m=\binom{j_1}{k}$ are possible. We show that all values from $m=0$ to $m=\binom{j_1+1}{k}$ are possible for $n=j_1+1$.

	Note that it is possible to make the largest segment sufficiently large such that it will be impossible to form a $k$-gon with that segment, so we can ignore that segment. Applying our inductive hypothesis, on the other $j_1$ pieces, all values from $0$ to $\binom{j_1}{k}$ are possible.

	We now need to show that all values between $\binom{j_1}{k}+1$ and $\binom{j_1+1}{k}$ are possible. We want to break our stick into pieces with lengths $a_1 \le a_2 \le \ldots \le a_{j_1}$, such that any $k$ pieces form a triangle and the set $S=\{ a_{i_1} + a_{i_2} + \ldots a_{i_{k-1}} | 1 \le i_1<i_2< \ldots < i_{k-1} \le j_1 \}$ to have $\binom{j_1}{k-1}$ distinct values. Let these values be $x_1< x_2 < \ldots  < x_{\binom{j_1}{k-1}}$. For $\binom{j_1}{k-1}+l$ to be possible for some positive integer $0\le l\le \binom{j_1}{k-1}$, we can let $x_l<a_{j_1+1}<x_{l+1}$, where $x_0=0$ and $x_{\binom{j_1}{k-1}+1} = \infty$.

	We can construct such a sequence with induction. The base case is true. For the inductive step, assume that it is possible to construct such a sequence for $n=j_2 \ge k$. To show that it is possible for $j_2+1$, we can let $a_{j_2}<a_{j_2+1}<a_1+a_2+ \ldots + a_{k-1}$. So now, there are $\binom{j_2}{k}+\binom{j_2}{k-1} = \binom{j_2+1}{k}$ distinct values in $S$. Also, it is possible to avoid repeated sums as there are an infinite number of possible values $a_{j_2+1}$ can take on but only $\binom{j_2}{k-1}$ that must be avoided.

	So indeed, all values from $0$ to $\binom{n}{k}$ are possible.
\end{proof}

\subsection{An application of $P(3,4,1) = 4/7$}
Suppose we repeatedly select uniformly random breaking points
until any three segments form a triangle.
Let $X$ be the number of times the segment is broken.
We will compute $\mathbb P[X=3]$.

\begin{theorem}
	\label{thm:talkon_main}
	We have $\mathbb P[X=3] = \frac{39}{112}$.
\end{theorem}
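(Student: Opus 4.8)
\section*{Proof proposal}

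The plan is to read the process as adding independent uniform cut points to $[0,1]$ one at a time, so that after $k$ cuts there are $k+1$ pieces and $X$ is the least $k$ for which some three of the current pieces satisfy the triangle inequality. Two facts drive the computation. First, cuts placed one at a time have exactly the same joint law as the same number of cuts placed simultaneously, so the four pieces present after three cuts are exchangeable and distributed as a uniform break into $4$ pieces. Hence if $A$ is the event that the first three pieces form a triangle and $B$ the event that the four pieces (after the third cut) form a triangle, then $\mathbb P[B] = P(3,4,1) = \tfrac47$ and $\mathbb P[A] = P(3,3,1) = \tfrac14$. Second, and this is what makes $X=3$ interesting, the triangle property is \emph{not} monotone in the number of cuts: splitting a side of a valid triangle can destroy triangle-ability. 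Since $X=2$ holds exactly on $A$ and $X=3$ exactly on $A^c\cap B$, I would write
\[
\mathbb P[X=3] = \mathbb P[A^c\cap B] = \mathbb P[B] - \mathbb P[A] + \mathbb P[A\cap B^c] = \tfrac47 - \tfrac14 + \mathbb P[A\cap B^c],
\]
reducing everything to $\mathbb P[A\cap B^c]$: the probability that the three pieces form a triangle but the fourth cut destroys all triangles.

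To handle $B^c$ I would first record the elementary fact that four sorted lengths $w\le x\le y\le z$ bound no triangle precisely when $x+y\le z$ and $w+x\le y$, since every other triple is dominated by one of these two. Next I would condition on the sorted lengths $a\le b\le c$ of the three pieces, which on $A$ satisfy $a+b+c=1$ and $c<\tfrac12$, and on which piece the third cut lands in (with probability its length) and where (uniformly inside it). A short case check shows that splitting the \emph{largest} piece $c$ can never destroy triangle-ability, because the three largest of the four resulting pieces still satisfy the triangle inequality. Splitting $a$ or $b$ can fail, and carrying out the same inequality bookkeeping, the conditional probability of $B^c$ given $(a,b,c)$ works out to
\[
(2c-2b-a)^+ + (2c-3a-b)^+ = (2-3a-4b)^+ + (2-5a-3b)^+,
\]
after substituting $c=1-a-b$.

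Finally I would integrate this over the region of valid sorted triangles $\{0<a\le b\le c,\ a+b>c\}$ against the uniform law, which in the coordinates $(a,b)$ has constant density $12$ on the sorted simplex; I would verify this normalization by recovering $\mathbb P[A]=\tfrac14$ as a sanity test. Each positive part is a piecewise-linear sliver, so the integral splits into a handful of regions cut out by the lines $3a+4b=2$, $5a+3b=2$, $a+b=\tfrac12$, $b=a$, and $b=\tfrac{1-a}{2}$; each piece integrates to a simple rational, and summing gives $\mathbb P[A\cap B^c] = 12\cdot\tfrac1{448} = \tfrac3{112}$, hence $\mathbb P[X=3] = \tfrac{36}{112} + \tfrac3{112} = \tfrac{39}{112}$.

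I expect the main obstacle to be the case analysis behind $B^c$ rather than the integration: one must correctly see that only the two smaller pieces, when split, can kill all triangles, and pin down the exact thresholds $(2c-2b-a)^+$ and $(2c-3a-b)^+$. The subsequent integration is routine but fiddly because of the piecewise-linear boundaries, so the normalization check against $P(3,3,1)=\tfrac14$ is worth doing to guard against sign and bookkeeping errors.
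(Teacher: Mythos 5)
Your proposal is correct, and its skeleton is exactly the paper's: both write $\mathbb P[X=3]=\mathbb P[B]-\mathbb P[A]+\mathbb P[A\cap B^c]=\tfrac47-\tfrac14+\tfrac3{112}$, pulling $P(3,4,1)=\tfrac47$ from the literature and reducing everything to the probability $\tfrac{3}{112}$ that two breaks yield a triangle which the third break then destroys (the paper's event $E$). Where you differ is in how that key quantity is computed. The paper's lemma works in the simplex $\{(a,b,c,d):a+b+c+d=1\}$ of the four final pieces: it identifies the region where the merged triple $(a+b,c,d)$ forms a triangle but $(a,b,c,d)$ does not as six small tetrahedra of total volume $\tfrac{6}{224}$, then multiplies by $\tfrac13$ for the position of the last cut and by $3$ for the three choices of merged pair. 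Your route instead conditions on the sorted triple $a\le b\le c$ after two breaks and on where the third cut lands, derives the destruction probability $(2c-2b-a)^+ + (2c-3a-b)^+$ (your observations that splitting the largest piece is always safe and that four sorted lengths admit no triangle iff $x+y\le z$ and $w+x\le y$ both check out), and integrates against density $12$ on the sorted simplex; I verified the integral does come to $\tfrac1{448}$, giving $\tfrac3{112}$. Your version is more computational but entirely self-contained, whereas the paper's volume dissection is slicker but leans on an unstated ``part 2 above'' for the octahedron/tetrahedron decomposition; the two are equally valid, and your normalization check against $P(3,3,1)=\tfrac14$ is a sensible safeguard for the piecewise integration.
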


From \cite{illinois} we know $P(3,4,1) = 4/7$.
We need one more lemma.

\begin{lemma}
	Suppose we repeatedly select uniformly random breaking points
	until any three segments form a triangle.
	Let $E$ be the event that
	\begin{enumerate}
		\item[(i)] After two breaks, we have a triangle.
		\item[(ii)] After the third break, we have no triangle.
	\end{enumerate}
	Then $\mathbb P[E]=\frac{3}{112}$.
\end{lemma}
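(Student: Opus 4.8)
The plan is to condition on the first two break points and integrate out the location of the third. I would write the first two (unordered) break points as sorted values $0 \le a \le b \le 1$, so that the three original segments are $s_1 = a$, $s_2 = b-a$, $s_3 = 1-b$. Since the three spacings of two uniform points are exchangeable and uniform on the simplex $s_1+s_2+s_3=1$, condition (i) is exactly the event that all three segments lie below $\half$, which carries probability $\frac14$. Writing $t$ for the third (uniform) break point, I would express
\[ \mathbb P[E] = \mathbb E\bigl[\mathbf 1_{\text{(i)}}\cdot M(s_1,s_2,s_3)\bigr], \]
where $M(s_1,s_2,s_3)$ is the measure of the set of split positions $t\in[0,1]$ for which the four resulting pieces contain no triangle. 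Because $M$ and the triangle condition are symmetric in the three segments, exchangeability lets me pass to the ordered simplex $\{x\le y\le z,\ x+y+z=1\}$ and write $\mathbb P[E]=6\,\mathbb E[\mathbf 1_{z<1/2}\,M(x,y,z)]$.

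The next step is to compute $M(x,y,z)$ for a fixed triangle with sides $x\le y\le z$ (each below $\half$). The third point cuts one of the three segments in two, so I must find when all four triples among the resulting pieces fail the triangle inequality. A convenient reduction is that four sorted lengths $w_1\le w_2\le w_3\le w_4$ contain no triangle precisely when $w_1+w_2\le w_3$ and $w_2+w_3\le w_4$. Using this (or by checking the four triples directly) I would split into three cases according to which segment is cut: cutting the largest segment $z$ never works, because the triple formed by $y$ together with the two halves can never be made degenerate; cutting the smallest segment $x$ works on a sub-interval of split positions of length $x$ when $z-y\ge x$ and of length $\max(0,2(z-y)-x)$ otherwise; and cutting the middle segment $y$ works on split positions of total length $\max(0,2z-3x-y)$. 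Summing these yields a closed form for $M$.

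Finally I would substitute $z=1-x-y$ and restrict to the region $\mathcal R$ cut out by $x\le y\le z$ and $z<\half$, which is the triangle with vertices $(\tfrac14,\tfrac14)$, $(\tfrac13,\tfrac13)$, $(0,\tfrac12)$. Throughout $\mathcal R$ one has $x+y>\half$, hence $z-y<x$, so $M$ collapses to the pleasantly linear
\[ M = \max(0,\,2-5x-3y) + \max(0,\,2-3x-4y). \]
Splitting $\mathcal R$ along the two lines $5x+3y=2$ and $3x+4y=2$ and integrating each positive linear piece (each integral is the area of a small triangle of area $\tfrac1{112}$ times the average of the vertex values, the form vanishing at two of the three vertices) I would obtain $\int_{\mathcal R}M=\tfrac1{448}$. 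Multiplying by the normalizing constant $12$ — the factor $6$ from the ordering together with the density $2$ of the projected uniform simplex measure — gives $\mathbb P[E]=12\cdot\tfrac1{448}=\tfrac{3}{112}$.

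The main obstacle will be the casework defining $M$: for each choice of cut segment and each ordering of the pieces I must pin down the exact sub-interval of split points that makes all four triples degenerate. The middle- and smallest-segment cases are the delicate ones, because the binding constraint switches between the triple containing both halves and the triples pairing one half with a whole segment, so an off-by-one in these ranges would corrupt the final integral. Once $M$ is correctly determined, the remaining work is routine, since every integrand is linear over an explicit triangle.
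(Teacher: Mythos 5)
Your proposal is correct, and the numbers check out: on $\mathcal R$ one indeed has $z-y<x$, the cut-$x$ and cut-$y$ measures reduce to $\max(0,2-3x-4y)$ and $\max(0,2-5x-3y)$ respectively, both positivity regions are triangles of area $\tfrac1{112}$ (with vertex-value averages $\tfrac1{12}$ and $\tfrac16$), giving $\int_{\mathcal R}M=\tfrac1{1344}+\tfrac1{672}=\tfrac1{448}$ and $\mathbb P[E]=12\cdot\tfrac1{448}=\tfrac3{112}$. However, your route is genuinely different from the paper's. You condition \emph{forward} on the two-break configuration $(x,y,z)$ and integrate the one-dimensional measure $M$ of bad third cuts over the ordered simplex; the paper instead works \emph{backward} from the final four-piece configuration $(a,b,c,d)$, uses the fact that the last break is equally likely to be any of the three interior breakpoints, and computes the volume of the region $\{\max(a+b,c,d)\le\tfrac12\}\setminus\{\text{some triple forms a triangle}\}$ as six small tetrahedra of total volume $\tfrac{6}{224}$ inside a decomposition of the $3$-simplex. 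The two setups are related by Fubini, but the actual work is different: the paper's argument is a slick polytope-volume computation, though it leans on an octahedron/tetrahedron decomposition referred to as ``part 2 above'' that does not actually appear in the text; your version is self-contained and entirely elementary, at the cost of the casework you correctly anticipate as the delicate step.

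One small caveat on that casework: your stated reason that cutting the largest segment $z$ never destroys all triangles --- ``the triple formed by $y$ together with the two halves can never be made degenerate'' --- is not quite right as written. If $z$ is cut into $p\ge z-p$ with $p>(y+z)/2$, the triple $\{y,p,z-p\}$ does fail the triangle inequality; in that regime it is the triple $\{x,y,p\}$ that survives, since $x+y=1-z>\tfrac12>z\ge p$. The conclusion that cutting $z$ contributes nothing is correct, but the justification needs this second sub-case (equivalently, run your two-inequality criterion $w_1+w_2\le w_3$, $w_2+w_3\le w_4$ through both orderings). With that repaired, the proof is complete.
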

\begin{proof}
The cases (that our last breaks are at $x$ or $y$ or $z$ from breakpoints at $0<x<y<z<1$) is essentially the same so we consider when our last break is at $x$ first. If our final point is $(a,b,c,d)$ then before the last break the segments has length $a+b,c,d$.

By our assumption this creates a triangle so $\max\{a+b,c,d\}\leq \frac{1}{2}$. Now the region satisfying the previous equation is exactly half the octahedron in part 2 above. (It is a square pyramid with base $M_{AC}M_{AD}M_{BD}M_{BC}$ and top vertex $M_{CD}$.)

We need to also make sure that $(a,b,c,d)$ cannot create a triangle, that is $(a,b,c,d)\not\in \mathcal{R}$. With these two constraints, $(a,b,c,d)$ must be in one of six ``small tetrahedrons'' from part 2 above. These have total volume $\frac{6}{224}=\frac{3}{112}$.

Hence this is the probability that $(a,b,c,d)$ does not create a triangle, but $a+b,c,d$ creates one. Now we also need our last break to be $x$, the least-valued point; this has probability $\frac{1}{3}$.
Therefore, the probability of all these happening:
\begin{itemize}
	\item After two breaks, we have a triangle.
	\item After the third break, we do not have a triangle
	\item The third break happened at $x$, the least breakpoint.
\end{itemize}
is exactly $\frac{1}{3}\cdot\frac{3}{112}=\frac{1}{112}$.

Now, our third break can also be at $y$ or $z$ so $\mathbb P[E]$ is three times $\frac{1}{112}$, that is, $\frac{3}{112}$.
\end{proof}

\begin{proof}
	[Proof of Theorem~\ref{thm:talkon_main}]
Consider three events:
\begin{itemize}
\item $C_1$: After two breaks, we have a triangle. After the third break, we still have a triangle.
\item $C_2$: After two breaks, we have a triangle. After the third break, we do not have a triangle.
\item $C_3$: After the third break, we do not have a triangle. After two breaks, we have a triangle.
\end{itemize}
We can see that
\begin{align*}
\mathbb P[X=2] &= \mathbb P[C_1]+\mathbb P[C_2] \\
\mathbb P[X=3] &=\mathbb P[C_3] \\
\mathbb P[T] &= \mathbb P[C_1]+\mathbb P[C_3] \\
\mathbb P[E] &= \mathbb P[C_2].
\end{align*}
and hence 
\[ \mathbb P[X=3] = \mathbb P[T] + \mathbb P[E] - \mathbb P[X=2]
	= \frac{4}{7}+\frac{3}{112}-\frac{1}{4}
	= \frac{39}{112}. \qedhere \]
\end{proof}

\begin{corollary}
	We have $\mathbb E[X] \le 3.398$.
\end{corollary}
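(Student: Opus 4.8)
The plan is to use the tail-sum formula $\mathbb{E}[X] = \sum_{n \ge 0} \mathbb{P}[X > n]$, which is valid since $X$ is a nonnegative integer random variable, and to control each tail probability separately. Because at least two breaks are needed before three segments even exist, we have $X \ge 2$ almost surely, so $\mathbb{P}[X>0] = \mathbb{P}[X>1] = 1$. For the next two terms I would insert the exact values we already have: $\mathbb{P}[X>2] = 1 - \mathbb{P}[X=2] = 1 - \tfrac14 = \tfrac34$, and, invoking Theorem~\ref{thm:talkon_main}, $\mathbb{P}[X>3] = 1 - \mathbb{P}[X=2] - \mathbb{P}[X=3] = 1 - \tfrac14 - \tfrac{39}{112} = \tfrac{45}{112}$.

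The key step for the remaining tail is to read off stopping as a first-hitting event. Writing $T_j$ for the event that some three segments form a triangle after $j$ breaks, we have $X = \min\{j : T_j\}$, and hence $\{X > n\} \subseteq \{T_n^{c}\}$ for every $n$ (if we have not stopped by break $n$, then in particular no triangle is formable at break $n$). Since after $n$ breaks the $n+1$ pieces are distributed exactly as a uniformly random partition of $[0,1]$, we get $\mathbb{P}[T_n^{c}] = 1 - P(3,\,n+1,\,1)$, so that for all $n$ we have the unconditional bound $\mathbb{P}[X>n] \le 1 - P(3,n+1,1) = \prod_{k=2}^{n+1} \frac{k}{F_{k+2}-1}$, using the product formula for $P(3,n,1)$ recalled in \S\ref{sec:numtri}. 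Writing $q_m = \prod_{k=2}^{m} \frac{k}{F_{k+2}-1}$, this reads $\mathbb{P}[X>n] \le q_{n+1}$.

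It then remains to sum $\sum_{n \ge 4} \mathbb{P}[X>n] \le \sum_{m \ge 5} q_m$. Because $F_{k+2}$ grows geometrically, each factor $\tfrac{k}{F_{k+2}-1}$ tends to $0$, so $q_m$ decays faster than any geometric sequence; one computes the first few terms ($q_5 = \tfrac{5}{28}$, $q_6 = \tfrac{3}{56}$, $q_7 = \tfrac{1}{88}$, $q_8 = \tfrac{1}{594}$, $\dots$) exactly and bounds the remainder $\sum_{m \ge 9} q_m$ by a geometric tail with ratio $q_{10}/q_9 = \tfrac{10}{F_{12}-1} < \tfrac17$, which is negligible. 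This gives $\sum_{m \ge 5} q_m < 0.2454$, and assembling the pieces yields $\mathbb{E}[X] \le 2 + \tfrac34 + \tfrac{45}{112} + 0.2454 < 3.398$.

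The main obstacle is not the computation but the sharpness: the naive bound $\mathbb{P}[X>3] \le 1 - P(3,4,1) = \tfrac37$ is too lossy and pushes the estimate above $3.40$, so one genuinely needs the exact value $\mathbb{P}[X=3] = \tfrac{39}{112}$ supplied by Theorem~\ref{thm:talkon_main}; this is exactly why the statement is phrased as a corollary of that theorem. Once the two exact terms are in hand, verifying the inclusion $\{X>n\}\subseteq\{T_n^c\}$ and bounding the super-exponentially decaying Fibonacci-product tail are both routine.
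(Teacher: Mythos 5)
Your proposal is correct and follows essentially the same route as the paper: the tail-sum formula $\mathbb E[X]=\sum_n \mathbb P[X>n]$, the inclusion $\{X>n\}\subseteq T_n^c$ combined with the Fibonacci product formula for $P(3,n+1,1)$ to bound the tail, and the exact value $\mathbb P[X=3]=\frac{39}{112}$ from Theorem~\ref{thm:talkon_main} to sharpen the $n=3$ term by $\frac{3}{112}$, which is precisely how the paper improves $3.424$ to $3.398$. Your write-up is somewhat more explicit than the paper's about which terms are exact and which are merely bounded, but the decomposition and the arithmetic are the same.
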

\begin{proof}
Let $T_n$ be the event that at least one triangle can be formed after $n$ breaks ($=n+1$ pieces).
If there is a triangle after $n$ breaks, we must get the first triangle after some $k\leq n$ breaks, so $\mathbb P[X\leq n] \geq  \mathbb P[T_n].$
From \cite{illinois} we have, when $n\geq 1$,
$\mathbb P[T_n] =1-\prod_{k=2}^{n+1} \frac{k}{F_{k+2}-1}$.
As $X$ must be a positive integer,
\begin{align*}
	\mathbb E[X] &= \sum_{n=1}^\infty n\cdot \mathbb P[X=n] \\
	&= \sum_{n=1}^\infty \mathbb P[X\geq n]
		= \sum_{n=1}^\infty \left(1-\mathbb P[X\leq n-1]\right) \\
	&= 1+\sum_{n=1}^\infty \left(1-\mathbb P[X\leq n]\right)\leq 1+ \sum_{n=1}^\infty \left(1-\mathbb P[T_n]\right) \\
	&=1+\sum_{n=2}^\infty \prod_{k=2}^{n} \frac{k}{F_{k+2}-1}.
\end{align*}
Here the last sum of products has a numerical value a little less than $2.424$,
giving a bound of $\mathbb E[X] \leq 3.424$.
	If we take into account the difference of $\mathbb P[X\leq 3]$
	and $\mathbb P[T_3]$ which is $\frac{3}{112}$,
	we can push the bound down further to
	$\mathbb E[X] \leq 3.424-\frac{3}{112} \approx 3.398$.
\end{proof}

We have a corresponding (very crude) lower bound:
\[ \mathbb E[X] > 2\cdot\mathbb P[X=2]+3\cdot\mathbb P[X=3]
	+4\cdot\mathbb P[X\geq 4] = \frac{353}{112}\approx 3.151. \]
For comparison, the empirical value of $\mathbb E[X]$ is about $3.351$.

\bibliographystyle{alpha}
\bibliography{refs}

\end{document}